\theoremstyle{plain}
\newtheorem{theorem}{Theorem}[section]
\newtheorem{question}[theorem]{Question}
\newtheorem{lemma}[theorem]{Lemma}
\newtheorem{cor}[theorem]{Corollary}
\newtheorem{claim}[theorem]{Claim}
\theoremstyle{definition}
\newtheorem{definition}[theorem]{Definition}
\newtheorem{example}[theorem]{Example}
\numberwithin{equation}{section}
\newcommand{\field}{\mathbb{Q}}
\title{Monochromatic products and sums in the rationals}
\author{Matt Bowen}
\address{Department of Mathematics and Statistics, McGill University, 805 Sherbrooke St W., H3A 0B9  Montreal, Canada}
\email{matthew.bowen2@mail.mcgill.ca}
\author{Marcin Sabok}
\address{Department of Mathematics and Statistics, McGill University, 805 Sherbrooke St W., H3A 0B9  Montreal, Canada}
\email{marcin.sabok@mcgill.ca}
\thanks{Both authors are partly funded by the NSERC Discovery Grant  RGPIN-2020-05445, NSERC Discovery Accelerator
Supplement  RGPAS-2020-00097, the NCN Grant Harmonia  2018/30/M/ST1/00668 and thank the CRM (Centre de Recherches Math\'emathiques) for its support. The second author thanks also the FSMP (Fondation Sciences Mathématiques de Paris) for its support.}
\begin{document}

\maketitle

\begin{abstract}
    We show that for every coloring of the rationals into finitely many colors, one of the colors contains a set of the form $\{x,y,xy,x+y\}$ for some nonzero $x$ and $y$. %In fact, we prove stronger results for more complicated patters and deduce similar results for colorings of finite fields, generalizing a theorem of Green and Sanders.  }
    
    %{\color{red} I'm not particularly attached to the second sentence above, but I am very opposed to not using the coloring language, especially since we switch to it after the first couple paragraphs anyway.}

    %We show that for every partition of the rationals into finitely many pieces, one of them contains a subset of the form $\{x,y,xy,x+y\}$ for some nonzero $x$ and $y$. 
\end{abstract}

\section{Introduction}

%{\color{red} I'm not strongly attached to the version of the intro I wrote in earlier versions, but I think both Hindman's conjecture and our main result should definitely appear on page 1 or the top of page 2.  Maybe as a compromise we can do something similar to what Joel did in \cite{moreira2017monochromatic}, and begin by stating our main result then have a subsection containing a more extended history.  I would also not mention ultrafilters or ergodic theory, since we don't use either.}

The classical theorem of Schur \cite{schur} states that for any finite coloring of the naturals, one of the colors contains a subset of the form $\{x,y,x+y\}$ for some nonzero $x,y$. This result has had quite a few generalizations. For instance, Rado \cite{rado} generalized Schur's theorem to more general families of linear equations and the Folkman theorem \cite[Theorem 11]{grs2} says that for every $r$ we can find a monochromatic set consisting of all sums of subsets of an $r$-element set. %Variants of Folkman's result have been also proved by Rado and J. H. Sanders, see \cite{grs}. %Rado's theorem is a far-reaching generalization to \ldots Brauer.

In \cite{hindman.conjecture} Hindman famously asked (see also the textbook of Graham, Rothschild and Spencer \cite{grs2}) whether any finite coloring of $\mathbb{N}$ contains a monochromatic set of the form $\{x,y,xy,x+y\}$ for some nonzero $x,y$.
%The following question of Hindman is well known (see, e.g. \cite{hindman.conjecture}):
Our main result is as follows. %provides an affirmative answer to this question {\color{purple} maybe better be more specific: which question?} {\color{red} Actually, Moreira explicitly asked this question in his blog.  I'm not sure what the protocol for citing that sort of thing is, but it would probably be good to mention somewhere}.

\begin{theorem}\label{main.simple}
For any coloring of the rationals into finitely many colors there exists a monochromatic set of the form $\{x,y,xy,x+y\}$ for some nonzero $x,y$.
\end{theorem}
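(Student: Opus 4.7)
The plan is to exploit the fact that $(\mathbb{Q}^*,\cdot)$ is a free abelian group—in contrast to $(\mathbb{N},\cdot)$, which is only a commutative cancellative semigroup—so that multiplicative Ramsey-theoretic tools are much more powerful in the rationals. The overall strategy is to first produce an abundant monochromatic multiplicative structure and then to refine it so that the additive sum $x+y$ lands in the same color.

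First, I would apply Hindman's theorem in the countable abelian group $(\mathbb{Q}^*, \cdot)$, viewed via $\mathbb{Q}^*\cong\{\pm 1\}\oplus\bigoplus_p\mathbb{Z}$. For the given finite coloring, this yields an infinite sequence $(a_n)_{n\geq 1}$ of nonzero rationals such that every finite product $a_{i_1}\cdots a_{i_k}$ over distinct indices lies in a single color $\chi$. In particular $c(a_i)=c(a_ia_j)=\chi$ for all $i<j$. Next I would color the pairs $\{i,j\}$ by $c(a_i+a_j)$ and apply Ramsey's theorem to extract an infinite subsequence on which this induced color is constantly $\chi'$. If $\chi=\chi'$ we may simply take $x=a_i$, $y=a_j$ and we are done.

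The main difficulty is the case $\chi\neq\chi'$. To handle it, I would leverage the multiplicative homogeneity of $\mathbb{Q}$: for any $\lambda\in\mathbb{Q}^*$, the rescaled sequence $(\lambda a_n)$ still has all finite products in a single color (possibly depending on $\lambda$), while sums scale covariantly as $\lambda a_i+\lambda a_j=\lambda(a_i+a_j)$. By analyzing the induced coloring $\lambda\mapsto(c(\lambda a_i),\,c(\lambda(a_i+a_j)))$ and applying a further color-focusing step, one aims to produce a $\lambda$ for which the multiplicative color along $\lambda\cdot(a_n)$ and the additive color of $\lambda(a_i+a_j)$ coincide. A cleaner implementation would pass to the bicompactification $\beta\mathbb{Q}$ and combine a multiplicative idempotent ultrafilter on $\mathbb{Q}^*$ with additive closure properties of its members, so that one can select $x,y$ where $xy$ and $x+y$ simultaneously fall in the same color class.

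The hard step is precisely this final matching of the multiplicative color $\chi$ with the additive color $\chi'$, which is exactly the additive-multiplicative mixing obstacle that keeps Hindman's original question over $\mathbb{N}$ open. The crucial advantage in $\mathbb{Q}$ that I would exploit is unrestricted multiplicative invertibility: dilations by $\lambda\in\mathbb{Q}^*$ act transitively on $\mathbb{Q}^*$ and distribute over addition, giving a full one-parameter family of rescaled monochromatic multiplicative configurations across which to search for one whose additive partner $x+y$ happens to land in the correct color. Whether this is carried out combinatorially (by iteration and multi-stage color focusing over chosen multiplicative cosets) or via ultrafilter/dynamical methods on $\beta\mathbb{Q}$, the key input missing over $\mathbb{N}$ is precisely the group structure of $\mathbb{Q}^*$ under multiplication.
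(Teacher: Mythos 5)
There is a genuine gap, and it sits exactly where you flag the ``hard step,'' but the situation is worse than a missing detail: the rescaling mechanism you propose does not behave the way you claim. If $(a_n)$ has all finite products in color $\chi$, the rescaled sequence $(\lambda a_n)$ does \emph{not} have all finite products in a single color. A product of $k$ distinct terms is $\lambda^k a_{i_1}\cdots a_{i_k}$, so the scale factor depends on $k$. Concentrating on the quadruple itself: with $x=\lambda a_i$, $y=\lambda a_j$, the four entries are $\lambda a_i$, $\lambda a_j$, $\lambda(a_i+a_j)$, and $\lambda^2 a_ia_j$. The product $xy$ scales \emph{quadratically} under dilation while $x$, $y$, $x+y$ scale \emph{linearly}. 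This degree mismatch is precisely the additive--multiplicative mixing obstruction; it is not neutralized by the fact that $(\mathbb{Q}^*,\cdot)$ is a group. Consequently the ``one-parameter family of rescaled monochromatic multiplicative configurations'' you want to search over does not exist: for $\lambda\neq\pm1$ the rescaled configuration is no longer multiplicatively monochromatic, so you are not searching over a family with the structure your argument needs. Similarly, the allusion to combining a multiplicative idempotent ultrafilter in $\beta\mathbb{Q}$ with ``additive closure properties'' is a restatement of the open difficulty, not a route around it: there is no ultrafilter simultaneously idempotent for $+$ and $\cdot$, and the two operations on $\beta\mathbb{Q}$ are notoriously incompatible (neither is continuous in the other's variable on the relevant ideals).

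The paper avoids infinite Hindman-type structures entirely. It works with finitary IP$_r$ sets inside multiplicatively thick sets (Lemma~\ref{prod}), uses a localization lemma (Lemma~\ref{cover}) to decompose an arbitrary coloring into ``thick unions'' covered by shifted color classes, and then runs an iterative density-increment argument powered by the Bergelson--Glasscock quantitative Szemer\'edi theorem (Theorem~\ref{bergelson}), whose key feature is that the IP$^*_r$ bound is \emph{uniform} over the dilations $x\mapsto qx$. The color matching at the end comes from pigeonhole across the iteration (finding repeated tuples $(l_i,f_{1,i},\ldots,f_{n,i})$), not from rescaling a fixed configuration. So the two approaches are not variants of one another: your plan is built on dilating a monochromatic product structure, which the degree mismatch rules out, whereas the paper never needs the quadruple to be dilation-covariant because it constructs the four points directly, one constraint at a time.

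Concretely, to rescue any version of your plan you would need to explain how to control the color of $\lambda^2 a_i a_j$ as $\lambda$ varies while simultaneously controlling the color of $\lambda a_i$, $\lambda a_j$, $\lambda(a_i+a_j)$; no such control is given, and none is available from Hindman plus Ramsey alone.
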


In fact, in Theorem \ref{m2}, we prove an extension of the above result involving arithmetic progressions and several variables. In particular, we get
monochromatic sets of the form $\{x,y,xy,x+iy:i\leq k\}$ for any $k\in\mathbb{N}$.

%begin{question}\label{hindman q}
%Does every finite coloring of $\mathbb{N}$ contain monochromatic sets of the form $\{x,y,xy,x+y\}?$
%\end{question}

Recently, there has been quite a bit of progress on Hindman's conjecture. 
%As is typical of problems in Ramsey theory, the simplicity of this statement belies its difficulty; only recently has there been progress on even relaxations of Question \ref{hindman q}.  Of these relaxations, the most pertinent is whether anything more can be said about Hindman's question when $\mathbb{N}$ is replaced by a field $K.$  When $K$ is finite the answer turns out to be quite a bit: 
Green and Sanders \cite{green2016monochromatic}, building on earlier work of Shkredov \cite{shkredov} and Cilleruelo \cite{cilleruelo2012combinatorial}, showed that for any number $n$ of colors there exists a constant $c_n>0$ such that in any $n$-coloring of $\mathbb{F}_p$ there are at least $c_n p^2$ monochromatic tuples of the form $\{x,y,xy,x+y\}$.  In particular, the result of Green and Sanders implies that for any number $n$ there exists a prime $p_n$ such that if $p>p_n$, then in any coloring of $\mathbb{F}_p$ into $n$ colors there exists at least one monochromatic quadruple of the form $\{x,y,xy,x+y\}$. For infinite fields somewhat less was known. Bergelson and Moreira \cite{bergelson2017ergodic} proved that any finite coloring of an infinite field contains a monochromatic set of the form $\{x,xy,x+y\}$ for some nonzero $x,y$.  Later \cite{bergelson2018measure}, they generalized this result to a wider class of rings, and Moreira \cite{moreira2017monochromatic} gave a beautiful proof that any finite coloring of $\mathbb{N}$ contains a monochromatic set of the form $\{x,xy,x+y\}$ for some nonzero $x,y$. 

%In the case of two colors Graham showed that any $2$-coloring of $\{1,\ldots,252\}$ contains a monochromatic configuration the form $\{x,xy,x+y\}$, and by Hindman, who showed the same for any $2$-coloring of $\{2,\ldots,990\}$. The latter proofs were based on a computer search and a mathematical proof was given only recently by the first author Also, in the case of two colors, Goldoni and Goldoni showed that any $2$-coloring of $\{1,\ldots, 44\}$ contains a monochromatic subset of the form $\{x,y,xy,x+2y\}$.

%generalized further still to $\mathbb{N}$, but it remained open if any of these results could be extended to prove Question \ref{hindman q} for infinite fields.

%For colorings of finite fields $\mathbb{F}_p$ the Ramsey theorem can hold only assuming the field is large enough since if $p$ is small and $n$ is large, then we can color every element of $\mathbb{F}_p$ with a different color. 
%The result of Green and Sanders \cite{green2016monochromatic} is stated in a quantitative form, showing that for any number $n$ of colors there exists a constant $c_n>0$ such that in any $n$-coloring of $\mathbb{F}_p$ there are at least $c_n p^2$ monochromatic tuples of the form $\{x,y,xy,x+y\}$.  In particular, the result of \cite{green2016monochromatic} implies that for any number $n$ there exists a constant $p_n$ such that if $p>p_n$, then in a coloring of $\mathbb{F}_p$ into $n$ colors there exists at least one monochromatic quadruple of the form $\{x,y,xy,x+y\}$. 

By a standard compactness argument Theorem \ref{main.simple} extends to other fields, giving the following. 

\begin{cor}\label{finite.fields}
   For every $n$ there exists a prime $p$ such that whenever a field of characteristic at least $p$ is colored with $n$ colours, there exists a monochromatic set of the form $\{x,y,xy,x+y\}$ for some nonzero $x$ and $y$.
\end{cor}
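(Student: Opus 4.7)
The plan is to deduce the corollary from Theorem \ref{main.simple} by a standard compactness/ultraproduct argument. Suppose toward contradiction that the conclusion fails for some fixed number of colors $n$: then for every prime $p$ there exists a field $F_p$ with $\mathrm{char}(F_p) \geq p$ together with an $n$-coloring $c_p \colon F_p \to \{1,\ldots,n\}$ admitting no monochromatic quadruple of the form $\{x,y,xy,x+y\}$ with $x,y$ nonzero.

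First I would fix a non-principal ultrafilter $U$ on the set of primes and form the ultraproduct $F^{\ast} = \prod_p F_p / U$. Since for each fixed prime $q$ the inequality $\mathrm{char}(F_p) > q$ holds for all but finitely many $p$, the image of $q$ in $F^{\ast}$ is nonzero; hence $F^{\ast}$ is a field of characteristic $0$ and in particular contains a subfield isomorphic to $\mathbb{Q}$.

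Next, because the target $\{1,\ldots,n\}$ is finite, the colorings $c_p$ descend to a well-defined $n$-coloring $c^{\ast}$ of $F^{\ast}$ given by $c^{\ast}([x_p]) = \lim_U c_p(x_p)$. The property ``there exist nonzero $x,y$ such that $\{x,y,xy,x+y\}$ is monochromatic of color $i$'' is a single first-order sentence in the language of rings augmented by unary predicates for the colors; by \L{}o\'s's theorem its truth in $F^{\ast}$ would force its truth in $F_p$ for $U$-almost every $p$, contradicting the choice of $c_p$. Therefore $c^{\ast}$ admits no monochromatic $\{x,y,xy,x+y\}$ with $x,y \neq 0$. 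Restricting $c^{\ast}$ to the embedded copy of $\mathbb{Q}$ produces a finite coloring of the rationals with no such monochromatic quadruple, contradicting Theorem \ref{main.simple}.

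There is no real obstacle: once the ultraproduct is set up the argument is automatic, and the only point requiring any care is observing that the avoidance property is first-order expressible so that \L{}o\'s's theorem applies uniformly. The same template would transfer any finite-coloring Ramsey-type statement on $\mathbb{Q}$ expressible by a first-order sentence to all fields of sufficiently large characteristic.
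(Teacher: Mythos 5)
Your proof is correct and takes essentially the same approach as the paper: both are standard model-theoretic compactness arguments that exploit the first-order expressibility of the property and the fact that any field of characteristic zero contains a copy of $\mathbb{Q}$ to which Theorem \ref{main.simple} applies. The paper phrases the transfer via the completeness and compactness theorems for first-order logic (the sentence $\sigma$ holds in all models of the theory of characteristic-zero fields, hence is provable from a finite fragment), whereas you phrase it contrapositively via an ultraproduct and \L{}o\'{s}'s theorem; these are interchangeable renderings of the same argument.
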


In the case of two colors much more was known. Graham showed that any $2$-coloring of $\{1,\ldots,252\}$ contains a monochromatic configuration the form $\{x,y,xy,x+y\}$, and Hindman showed the same for any $2$-coloring of $\{2,\ldots,990\}$ \cite{hindman.conjecture}. It is worth noting that these proofs were based on a computer search and only recently the first author \cite{bowen2022monochromatic} gave a mathematical proof of the fact that any $2$-coloring of the naturals contains a monochromatic configuration the form $\{x,y,xy,x+y\}$.

In \cite{hindman}  Hindman proved  that  for any coloring of the naturals into finitely many colors, one of the colors contains all sums of finite subsets of an infinite set. A very elegant proof of the latter theorem was given by Galvin and Glazer and their method has since been used to prove a number of other strong combinatorial results (see, e.g., the textbook of Hindman--Strauss \cite{hindman.strauss} or of Todor\v{c}evi\'c \cite{todorcevic}). For instance, Bergelson, Hindman and Leader \cite{bhl} showed how such methods can be used to show that in any measurable coloring of the reals there exists an infinite set all of whose finite products and sums are monochromatic.

The classical van der Warden theorem \cite{vdw} says that for any coloring of the naturals into a finite number of colors, one of those colors contains arbitrarily long arithmetic progressions. This has been famously generalized by the Szem\'eredi theorem \cite{szemeredi} which ensures that arbitrarily long arithmetic progressions can be found in any subset of the naturals of positive density. Since the ergodic-theoretic proof of the Szemer\'edi theorem by {Furstenberg \cite{furstenberg1977ergodic}}, there has been quite a few advances in the field, and many generalizations have been proved using methods coming from ergodic theory (see, e.g., the textbook of McCutcheon \cite{mccutcheon}).

Our proof of Theorem \ref{main.simple} uses two main new ingredients. The first ingredient is a result due to Bergelson and Glasscock \cite{bergelson2016interplay} that is a quantitative version of the Szemer\'edi theorem and is based on the density version of the Hales--Jewett theorem \cite{furstenberg1991density}. The second ingredient seeks to localize certain thick sets with respect to a given finite coloring of the rationals. This ingredient is stated in purely combinatorial terms, in order to make the paper accessible to a wider audience. However, our original motivation was inspired by the methods of Galvin--Glazer and was focused on localizing minimal ideals with respect to the given finite coloring of the rationals.
%The methods: we use Bergelson--Glasscock and density Hales--Jewett but the new ingredients are purely combinatorial. Inspired by the methods of Galvin--Glazer and localizing minimal ideals with respect to the coloring. However, the presentation below is in purely finitely combinatorial language to be accessible to wider audience.

%\subsection{Organization and notation}
%Throughout the paper we will work with with an infinite field $\field$ instead of $\mathbb{Q}$ for the sake of generality of the partial results, even though the proofs are exactly the same as for the rationals.

%\section{Notation}

%Throughout this paper $\field$ denotes a field.

\section{IP sets and a quantitative version of the Szemer\'edi theorem}

%Throughout, $K$ will denote a countable field, and $d$ the upper density function with respect to any additive F\o lner sequence on $K$.

In this section we recall several notions of size that will be useful in the structure of the additive group $(\field,+)$ throughout the remainder of this paper. % in the structure of the additive group $(\field,+)$.  

For a finite sequence $(a_1,\ldots,a_n)$ of elements of $\field$ we  use the notation $\sum(a_1,\ldots,a_n)=a_1+\ldots+a_n$. Given a sequence $A$ of elements of $\field$ we write $$\mathrm{FS}(A)=\{ \sum A_0: A_0\mbox{ is finite subsequence of } A\}.$$ 

\begin{definition}
\mbox{}
\begin{itemize}
    \item A subset of $\field$ is \textbf{IP} if it contains a set of the form FS($A$) for some infinite sequence $A$ of elements of $\field$.
    \item Given $r\in\mathbb{N}$, a subset of $\field$ is \textbf{IP}$_r$ if it contains a set of the form FS($A$) for some finite sequence $A$  of length $r$ consisting of elements of $\field$.
    \item A subset of $\field$ is \textbf{IP}$^*_r$ if it has non-empty intersection with every IP$_r$ subset of $\field$.  
\end{itemize}
\end{definition}

Recall that an \textbf{invariant mean} on a commutative semigroup $S$ %is \textbf{amenable} if space of all bounded, real-valued functions on $S$ with the $\|\cdot\|_\infty$ norm admits a \textbf{left invariant mean}, i.e., 
is a positive linear functional $d$ of norm $1$ which is translation invariant on the space of all bounded, real-valued functions on $S$ with the $\|\cdot\|_\infty$ norm. Any commutative semigroup admits an invariant mean.

%Given an commutative semigroup $S$, the \textbf{upper Banach density} of a subset $A$ of $S$ is defined as the supremum of $d(A)$ over all invariant means on $S$. 
The Szemer\'edi theorem \cite{szemeredi} states that a subset of $(\mathbb{N},+)$ of positive density contains artitrarily long arithmetic progressions. The set of possible differences of such arithmetic progressions is an IP$^*$ set, as proved by Furstenberg and Katznelson \cite{fk}

The following result of Bergelson and Glasscock \cite[Theorem 7.5]{bergelson2016interplay} is a quantitative strengthening of Szemer\'edi's theorem that we will use in the inductive steps of our construction. % in \cite{bergelson2016interplay}.% as an application of the density Hales-Jewett theorem:

\begin{theorem}[Bergelson, Glasscock,  \cite{bergelson2016interplay} Theorem 7.5]\label{bergelson}
Let $s\in\mathbb{N}$ and $\alpha>0$ be given. There exists $r\in\mathbb{N}$ and $\alpha'>0$ for which the following holds. For any commutative semigroup $C$ and homomorphisms $\varphi_1,\ldots,\varphi_s:C\to C$, for any invariant mean $d$ on $C$ and $A\subseteq  C$ with $d(A)>\alpha$ the set $$\{c\in C: d(A-\varphi_1(c) \cap \ldots\cap A-\varphi_s(c))>\alpha'\}\quad\mbox{ is IP}_r^*.$$ %is IP$_r^*$.
\end{theorem}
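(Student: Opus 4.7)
The plan is to derive this quantitative IP$_r^*$ recurrence as a finitary consequence of the density Hales--Jewett theorem (DHJ), following the general strategy behind the Furstenberg--Katznelson IP-Szemer\'edi framework. I would fix $s$ and $\alpha$ and locate $r$ and $\alpha'$ by reducing the statement to a configuration-finding problem on a high-dimensional combinatorial cube, bypassing any ergodic machinery and exploiting that DHJ itself is a finitary quantitative theorem with bounds uniform in the ambient structure.

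Concretely, given an IP$_r$ set generated by $b_1,\ldots,b_r\in C$, I would look at the $2^r$ partial sums $u_F=\sum_{i\in F}b_i$ indexed by $F\subseteq\{1,\ldots,r\}$. By translation invariance of $d$, for each $j\in\{1,\ldots,s\}$ and each $F$ the translate $A-\varphi_j(u_F)$ has mean at least $\alpha$. The next step is to encode a set of density at least some explicit function of $\alpha$ inside a cube $\{0,1,\ldots,s\}^N$ whose combinatorial lines encode precisely the simultaneous-intersection events $\bigcap_{j=1}^{s}\bigl(A-\varphi_j(u_F)\bigr)$. Applying DHJ then yields $r=r(s,\alpha)$ and $\alpha'=\alpha'(s,\alpha)>0$, uniform in $C$, the homomorphisms, and $d$, such that any set of density greater than $\alpha$ in the cube contains a combinatorial line, which in turn produces a nonempty $F^*$ for which the intersection $\bigcap_{j=1}^{s}\bigl(A-\varphi_j(u_{F^*})\bigr)$ has mean greater than $\alpha'$.

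The main obstacle, I expect, is implementing this correspondence cleanly. First, one must arrange the encoding so that the "variable coordinate" of the combinatorial line corresponds to a genuine subset sum $u_{F^*}$ that serves \emph{simultaneously} for all $s$ homomorphisms $\varphi_j$, rather than to a more general linear combination of the $b_i$; this is where the alphabet size of the cube and the use of the $\varphi_j$ as labels become essential. Second, because $d$ is only finitely additive, one cannot directly push it through a limiting procedure; instead one must approximate $d$ from below by averages on suitable F\o lner-type sets in $C$, transfer to the finite cube, and then pigeonhole to find a subcube on which all the relevant translates of $A$ have density greater than $\alpha$. Once these reductions are in place, DHJ supplies the uniform quantitative bound and no further ergodic or ultraproduct input is required.
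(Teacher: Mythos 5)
The paper does not reprove this result: it is imported verbatim as Theorem 7.5 of Bergelson and Glasscock \cite{bergelson2016interplay}, and the only indication the paper gives of the underlying argument is the introductory remark that the Bergelson--Glasscock result ``is based on the density version of the Hales--Jewett theorem.'' Your sketch is therefore in the same spirit as the cited source, and the overall shape is right: fix a putative IP$_r$ set $\mathrm{FS}(b_1,\ldots,b_r)$, form the cube $\{0,1,\ldots,s\}^r$ with the encoding $\psi(w)=\sum_{i:w_i\neq 0}\varphi_{w_i}(b_i)$, note that along a combinatorial line with wildcard set $F$ one has $\psi(w^a)=\psi(w^0)+\varphi_a(u_F)$ for $a\geq 1$ precisely because each $\varphi_a$ is a homomorphism, and apply DHJ. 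This also matches your first ``obstacle'' comment about the alphabet and the $\varphi_j$ as labels.

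Two remarks. First, your second ``obstacle'' is not an obstacle: no F\o lner-type approximation is needed (nor even available in a general commutative semigroup). The whole argument involves only finitely many sets --- the $(s+1)^r$ translates $B_w=A-\psi(w)$, each of mean $>\alpha$ by invariance --- and finitely many combinatorial lines, so finite additivity of $d$ suffices. Second, the one step you have not addressed is how DHJ alone produces a uniform $\alpha'$: DHJ gives a line, not a mean bound. The missing argument is Markov plus pigeonhole on the mean $d$. The function $x\mapsto|\{w:x\in B_w\}|$ has $d$-average $>\alpha(s+1)^r$, so by Markov's inequality (valid for finitely additive means) the set of $x$ with $|\{w:x\in B_w\}|\geq(\alpha/2)(s+1)^r$ has $d$-mean $>\alpha/2$. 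Choosing $r$ to be the DHJ number for alphabet size $s+1$ and density $\alpha/2$, every such $x$ witnesses a combinatorial line inside $\{w:x\in B_w\}$, and pigeonholing over the fewer than $(s+2)^r$ combinatorial lines produces a single line $\{w^a\}_{a=0}^{s}$ witnessed on a set of $x$ of mean at least $\alpha':=(\alpha/2)(s+2)^{-r}$. Shifting by $\psi(w^0)$ and using invariance of $d$ then gives $d\bigl(\bigcap_{a=1}^s(A-\varphi_a(u_F))\bigr)>\alpha'$ with $u_F\in\mathrm{FS}(b_1,\ldots,b_r)$, and both $r$ and $\alpha'$ depend only on $s$ and $\alpha$, as required.
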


We will apply the above result in case $C$ is the additive group $(\field,+)$ and the homomorphisms $\varphi_1,\ldots,\varphi_s$ are of the form $\varphi_i(c)=q_i\cdot c$ for some $q_i\in\field$. The key point in our application is that the numbers $r$ and $\alpha'$ depend only on $s$ and $\alpha$, and not on the particular choice of the homomorphisms $\varphi_1,\ldots,\varphi_s$. %More precisely, we will need following iterative version of the Theorem \ref{bergelson} which we will use in the inductive step of our construction

%{\color{red} addsaf}

\section{Localizing thick and syndetic set in finite colorings}\label{sec:thick}

%{\color{red} This is less about the structure of thick sets and more about decomposing colorings into pieces that are syndetic relative to thick sets, so I'm not sure about the section title.}

In this section we recall some notions of size in semigroups, which will be stated and used only in terms of the multiplicative group $(\field\setminus\{0\},\cdot)$ throughout the paper. 

Even though this section is concerned with the structure of thick and syndetic sets in $\field\setminus\{0\}$, an equivalent reformulation of the statements below is in terms of localizing  minimal  left ideals in the semigroup $\beta(\field,\cdot)$ and was our original motivation for the approach below. %Below we use the standard combinatorial notation and write $[n]$ for the set $\{1,\ldots,n\}$.

\begin{definition}
\mbox{}
    \begin{itemize}
        \item A set $T\subseteq  \field\setminus\{0\}$ is (multiplicatively) \textbf{thick} if for any finite $F\subseteq  \field\setminus\{0\}$ there is an $a\in \field$ with $a\cdot F\subseteq  T$, 
        \item A set $S\subseteq  \field\setminus\{0\}$ is (multiplicatively) \textbf{syndetic}  if there is a finite $F\subseteq  \field\setminus\{0\}$ so that $\field\setminus\{0\}=F\cdot S$.
    \end{itemize}
\end{definition}
%Finally, recall that a set $T\subseteq  K$ is (multiplicatively) \textbf{thick} if for any finite $F\subseteq K$ there is an $a\in K$ with $Fa\subseteq T,$ and an $S\subseteq  K$ is (multiplicatively) \textbf{syndetic}  if there is a finite $F\subseteq K$ so that $K=F\cdot S$.  
Informally, a set is thick if it contains arbitrarily long (multiplicative) intervals, and it is syndetic if it has bounded (multiplicative) gaps.  Note that a set is thick if and only if its complement in $\field\setminus\{0\}$ is not syndetic. 

The only reason why we choose to work with the semigroup $(\field\setminus\{0\},\cdot)$ rather than $(\field,\cdot)$ is because we want to obtain non-zero elements in our main result. We will sometimes abuse the notation slightly and say that a subset of $\field$ is thick or syndetic if its intersection with $\field\setminus\{0\}$ has this property.%One can also consider thick and syndetic subsets of $(\field,\cdot)$ and those differ from the thick and syndetic subsets of $(\field\setminus\{0\},\cdot)$ by adding (or removing) $0$.

%In the proof of our main result we will use the following fact to control the color of the step sizes in the arithmetic progressions obtained from Theorem \ref{bergelson}. We will want to construct elements distinct from $1$ in our main result and that is why in the following fact we remove $1$ from the thick sets. This is the reason why we assume $\field$ is infinite below.

\begin{lemma}\label{prod}
Let $k, r, N\in\mathbb{N}$ and $T_1,\ldots,T_k\subseteq  \field\setminus\{0\}$ be thick sets. There are IP$_r$ sets $S_{1,j}\subseteq T_1,\ldots,S_{k,j}\subseteq T_k$ for every $j<N$ such that for any $i\leq j< N$ and for any $l_i,\ldots,l_j\leq k$ we have $$S_{l_i,i}\cdot S_{l_{i+1},i+1}\cdot\ldots\cdot S_{l_j,j}\subseteq  T_{l_i}.$$
\end{lemma}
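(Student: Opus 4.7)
My plan is to construct the sets $S_{l,j}$ by downward induction on $j$, from $j=N-1$ down to $j=0$. The observation that makes this work is that only the first index $i=j$ imposes constraints on $S_{l,j}$ at the moment of its construction; constraints with first index $i<j$ will be verified when we later construct $S_{l',i}$.

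Before the main induction I would record two preliminary facts about the multiplicative semigroup $(\field\setminus\{0\},\cdot)$. First, if $T$ is thick and $F\subseteq\field\setminus\{0\}$ is finite, then $T\cap\bigcap_{f\in F}T\cdot f^{-1}$ is again thick: given any finite $G$, thickness of $T$ applied to the finite set $G\cup G\cdot F$ yields $a\in\field$ with $a\cdot(G\cup G\cdot F)\subseteq T$, i.e.\ $a\cdot G$ lies in the intersection. Second, every thick set $T$ contains an IP$_r$ set. This follows by choosing $a_1,\ldots,a_r$ one at a time: having chosen $a_1,\ldots,a_s$ so that the finite IP$_s$ set $A_s$ they generate lies in $T$, the set $T\cap\bigcap_{p\in A_s}T\cdot p^{-1}$ is thick by the previous remark, so contains an element $a_{s+1}$, and then $A_{s+1}\subseteq T$.

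For the main induction, assume at step $j$ that $S_{l',j'}$ has been constructed for all $j<j'<N$ and $l'\leq k$, satisfying the conclusion of the lemma for all indices strictly greater than $j$. Since each $S_{l',j'}$ is a finite IP$_r$ set, the set
$$E_j \;=\; \{1\}\cup\bigcup_{j<j'<N}\;\bigcup_{l_{j+1},\ldots,l_{j'}\leq k} S_{l_{j+1},j+1}\cdot S_{l_{j+2},j+2}\cdots S_{l_{j'},j'}$$
is finite. For each $l\leq k$, the set $U_l:=T_l\cap\bigcap_{e\in E_j}T_l\cdot e^{-1}$ is thick by the first preliminary, hence contains an IP$_r$ set, which I take to be $S_{l,j}$. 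Then any element of $S_{l,j}\cdot S_{l_{j+1},j+1}\cdots S_{l_{j'},j'}$ has the form $s\cdot e$ with $s\in S_{l,j}\subseteq U_l$ and $e\in E_j$, so lies in $T_l$; and $S_{l,j}\subseteq T_l$ because $1\in E_j$.

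The main obstacle, as far as there is one, is choosing the right direction of induction: a bottom-up construction would force $S_{l,j}$ to avoid infinitely many bad products built out of sets not yet chosen, whereas in top-down order the only obligation on $S_{l,j}$ is membership in a finite intersection of translates of the thick set $T_l$, which the two preliminary facts dispatch cleanly. Everything else is bookkeeping.
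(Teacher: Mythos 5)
Your top-down inductive scheme, the finite set $E_j$, and the first preliminary (intersections of finitely many multiplicative translates of a thick set remain thick) match the paper's argument, which likewise constructs the sets $S_{l,j}$ in \emph{decreasing} order of the second index so that at each stage the only new obligations involve a finite product of sets already in hand.

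However, your proof of the second preliminary — that every thick set contains an IP$_r$ set — is incorrect. The IP$_r$ structure is \emph{additive}: $\mathrm{FS}(a_1,\ldots,a_r)$ consists of sums of the generators, so to get $A_{s+1}\subseteq T$ from $A_s\subseteq T$ you need $a_{s+1}\in T$ and $a_{s+1}+p\in T$ for each $p\in A_s$, i.e.\ $a_{s+1}\in T\cap\bigcap_{p\in A_s}(T-p)$. Your greedy step instead takes $a_{s+1}\in T\cap\bigcap_{p\in A_s}T\cdot p^{-1}$, which guarantees the multiplicative closure $a_{s+1}\cdot p\in T$, not the additive one. Since thickness here is a multiplicative notion, there is no reason $T\cap\bigcap_{p\in A_s}(T-p)$ should be thick (or even nonempty), and the induction on generators collapses. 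The correct one-line argument, which the paper uses, is to \emph{translate a fixed} IP$_r$ set rather than build one greedily: for a finite IP$_r$ set $S=\mathrm{FS}(a_1,\ldots,a_r)\subseteq\field\setminus\{0\}$ (e.g.\ $\{1,\ldots,r\}=\mathrm{FS}(1,\ldots,1)$) and any $t\neq 0$, one has $St=\mathrm{FS}(a_1t,\ldots,a_rt)$, again IP$_r$; thickness of $T$ applied to the finite set $S$ yields $t$ with $St\subseteq T$. Substituting this for your second preliminary, the rest of your proof is sound.
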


\begin{proof}
First, observe that %since $\field$ is infinite, there exist IP$_r$ subsets of $\field\setminus\{0\}$. Next, note that 
if $S$ is IP$_r$ and $t\in \field\setminus\{0\}$ then $St$ is IP$_r$.  This implies that any thick set is IP$_r$. Finally, note that if $T$ is thick and $F\subseteq \field\setminus\{0\}$ is finite, then $\{t\in\field\setminus\{0\}: Ft\subseteq  T\}$ is also thick. %Finally, given that $\field$ is infinite, for every thick set $T\subseteq \field\setminus\{0\}$ the set $T\setminus\{1\}$ is also thick.
This allows us to define the sets $S_{1,N-j},\ldots,S_{k,N-j}$ inductively for each $j\in \{0,\ldots,N-1\}$.
\end{proof}

%We will also need the following simple claim.

%\begin{claim}\label{nonsense}
%Let $\mathcal{T}$ be a collection of subsets of a finite set $X$. Suppose $A\subseteq  X$ is such that $A\cap Y\not=\emptyset$ for every $Y\subseteq  X$ whose complement does not contain an element of $\mathcal{T}$. Then there exists an element of $\mathcal{T}$ which is a subset of $A$.
%\end{claim}
%\begin{proof}
%Suppose that $A$ does not contain any element of $\mathcal{T}$. Put $B=X\setminus A$. By our assumption $B$ does not contain any %\end{proof}

The following lemma is the main technical tool that allows us to localize thick sets within the colors in a finite coloring of $\field$. It will be the key ingedient in dealing with an arbitrary number of colors. %It is stated in terms of a structure result for thick sets in $\field\setminus\{0\}$, and an equivalent reformulation can re-state it in terms of the localizing  minimal ideals in the semigroup $\beta(\field,\cdot)$. Below we use the standard combinatorial notation and write $[n]$ for the set $\{1,\ldots,n\}$.

\begin{lemma}\label{cover}
  Let $\field\setminus\{0\}=\bigcup_{i=1}^n C_i$ be a finite coloring.  There is $k\in\mathbb{N}$, index sets $Y_1,\ldots,Y_k\subseteq  [n],$ and a finite set $F\subseteq  \field\setminus\{0\}$ such that 
  
  \begin{itemize}
  
  \vspace{2mm}
      \item[(i)] for each $l \leq k$ the set $\bigcup_{m\in Y_l} C_m$ is thick,
      
      \vspace{2mm}
      
      \item[(ii)] for each $x\in \field\setminus\{0\}$ there exists $l\leq k$ such that for each $m\in Y_l$ we have $x\in F\cdot C_m$.
  \end{itemize}
\end{lemma}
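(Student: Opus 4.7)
The plan is to reduce the problem to the finite Boolean lattice $2^{[n]}$. Let
\[
\mathcal{G}=\bigl\{Y\subseteq[n]:\textstyle\bigcup_{m\in Y}C_m\text{ is thick}\bigr\}.
\]
Since any superset of a thick set is thick, $\mathcal{G}$ is upward closed; and since $\bigcup_m C_m=\field\setminus\{0\}$ is trivially thick, $[n]\in\mathcal{G}$, so $\mathcal{G}$ is nonempty. I would take $Y_1,\ldots,Y_k$ to be the (finitely many) minimal elements of $\mathcal{G}$, which makes clause (i) immediate. The content of the lemma then becomes: choose a finite $F$ such that for every $x\in\field\setminus\{0\}$ the ``hit set''
\[
Z_F(x):=\{m\in[n]:x\in F\cdot C_m\}
\]
belongs to $\mathcal{G}$. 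Upward closure of $\mathcal{G}$ in the finite poset $2^{[n]}$ will then furnish a minimal $Y_l\subseteq Z_F(x)$, which is precisely clause (ii).

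Constructing $F$ is where the thick/syndetic duality recalled in this section enters. Because the $C_m$ partition $\field\setminus\{0\}$, a subset $V\subseteq[n]$ lies in $\mathcal{G}$ if and only if the complementary union $\bigcup_{m\notin V}C_m$ fails to be syndetic. For each of the finitely many $W\subseteq[n]$ for which $\bigcup_{m\in W}C_m$ \emph{is} syndetic, fix a finite witness $F_W\subseteq\field\setminus\{0\}$ with $F_W\cdot\bigcup_{m\in W}C_m=\field\setminus\{0\}$, and set $F=\bigcup_W F_W$. This is a finite union of finite sets, hence finite.

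Verification of (ii) is then a one-line contradiction. Fix $x\in\field\setminus\{0\}$ and suppose $Z_F(x)\notin\mathcal{G}$. Then $W:=[n]\setminus Z_F(x)$ is one of the ``syndetic'' $W$, so $x\in F_W\cdot\bigcup_{m\in W}C_m$; that is, $x=f\cdot c$ for some $f\in F_W\subseteq F$ and some $c\in C_m$ with $m\in W$. But this forces $m\in Z_F(x)$, contradicting $m\in[n]\setminus Z_F(x)$. Hence $Z_F(x)\in\mathcal{G}$ for every $x$, as required. The single conceptual step is recognizing that after passing to the finite color-index lattice $2^{[n]}$, the entire statement is dictated by the thick/syndetic dichotomy; once this is seen, I do not foresee any real obstacle, and the argument proceeds mechanically.
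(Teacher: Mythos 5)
Your proof is correct and follows essentially the same route as the paper: the paper also forms the family of ``thick index sets,'' takes $F$ to be a union of syndetic witnesses over all $W$ with $C_W$ syndetic, defines what you call $Z_F(x)$ (they call it $A_x$), and derives $Z_F(x)\in\mathcal{G}$ by the same thick/syndetic duality contradiction. The only cosmetic difference is that you take the $Y_l$ to be the minimal elements of $\mathcal{G}$, while the paper takes the finitely many sets $Y_x$ produced by the claim; both choices yield clauses (i) and (ii).
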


\begin{proof}

For $Y\subseteq  [n]$ we denote $C_Y=\bigcup_{m\in Y}C_m$. Write $$\mathcal{T}=\{Y\subseteq [n]: C_Y \mbox{ is thick}\}$$  and 

$$\mathcal{S}=\{Y\subseteq [n]: \field\setminus\{0\}=\bigcup_{f\in F_Y} fC_Y\mbox{ for some finite }F_Y\subseteq \field\setminus\{0\}\},$$
i.e., $\mathcal{S}=\{Y\subseteq [n]: C_Y$ is syndentic$\}$. Since $\mathcal{S}$ is a finite collection, we can choose a finite set $F\subseteq \field\setminus\{0\}$ which contains all possible finite sets $F_Y$ in the definition of $\mathcal{S}$ above. 
%Let $F=\bigcup_{Y\subseteq[n]}$

For $x\in \field\setminus\{0\}$ write
%Write $$\mathcal{T}=\{Y\subseteq [n]: C_Y \mbox{ is thick}\}$$ and 
$$A_x=\{m\in[n]: \exists f\in F\   x\in f C_m\}.$$ 
 
% Note that if $Y\subseteq [n]$ is such that its complement does not contain an element of $\mathcal{T}$, i.e. $(\field\setminus\{0\})\setminus C_{Y}$ is not thick, then $Y\in \mathcal{S}$, so $\field\setminus\{0\}=\bigcup_{f\in F} f C_Y$. Thus, for every  $Y$ whose complement does not contain an element of $\mathcal{T}$, we have $A_x\cap Y\not=\emptyset$. 
 Note that by the definition of $\mathcal{S}$ for every $x\in \field\setminus\{0\}$ we have
 \begin{equation}\label{abs.non}
     A_x\cap Y\not=\emptyset\quad\mbox{for every }Y\in\mathcal{S}.
 \end{equation}

\begin{claim}\label{new.abstract.nonsense}
   For every $x\in \field\setminus\{0\}$ there exists $Y_x\in\mathcal{T}$ such that $Y_x\subseteq A_x$.
\end{claim}
\begin{proof}
    For the sake of contradiction suppose that no such set $Y_x$ exists. Then $[n]\setminus A_x$ belongs to $\mathcal{S}$, which contradicts (\ref{abs.non}).
\end{proof}
 
 %By Claim \ref{nonsense} applied to $X=[n]$ and $A=A_x$, the set $A_x$ covers an element of $\mathcal{T}$, say $Y_x$. %=\{i_1,\ldots,i_m\}$. 
 
 Finally, there are finitely many choices for $Y_x$ as in Claim \ref{new.abstract.nonsense}, and so these choices correspond to the desired $Y_1,\ldots,Y_k$.

\end{proof}

\section{Finding the patterns $\{x,y,xy,x+y\}$}

Before giving the proof of Theorem \ref{main.simple}, we discuss two special cases as a warm-up.
Namely, we first present the proof in the case when each color class is syndetic and next we present the proof in the case when the color classes are all thick.  In the proof of Theorem \ref{main.simple} we will carry out the arguments of these special cases simultaneously, using the ideas of Section \ref{sec:thick}.

\subsection{Two special cases}
While the proof of Theorem \ref{main.simple} will not depend on the next two claims, they might be useful to the reader before moving on to the proof of Theorem \ref{main.simple}.

We begin with the proof in the case that each color class is syndetic. Below, we use Moreira's theorem \cite[Theorem 7.2]{moreira2017monochromatic} in the general form that holds for any field (cf. the remarks after \cite[Definition 7.1]{moreira2017monochromatic}). %, and so in particular for any for field.

\begin{claim}\label{synd}
Suppose that $\field=\bigcup_{i=1}^n C_i$ and each $C_i$ is syndetic.  There is a monochromatic set of the form $\{x,y,xy,x+y\}$.
\end{claim}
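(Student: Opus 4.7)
The strategy I would take is to apply Moreira's theorem (in its general form for fields) to a refinement of the coloring $c$ built from the multiplicative syndeticity witnesses, and then to use syndeticity of the color classes to locate the fourth element $y$ in the correct color.

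Since each $C_i$ is multiplicatively syndetic, the first step is to fix a finite set $F\subseteq \field\setminus\{0\}$ containing $1$ with $\field\setminus\{0\}=F\cdot C_i$ for every $i\in [n]$. I would then consider the refined coloring
$$\tilde c : \field\setminus\{0\} \to [n]^F, \qquad \tilde c(z) = (c(f z))_{f\in F},$$
which has finitely many colors. Applying Moreira's theorem to $\tilde c$ produces nonzero $x,y\in\field$ such that $\{x, xy, x+y\}$ is $\tilde c$-monochromatic; equivalently, $\{fx, f\cdot xy, f(x+y)\}$ is $c$-monochromatic for every $f\in F$. Taking $f=1$ gives $\{x, xy, x+y\}\subseteq C_i$ for some color $i\in[n]$.

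To complete the monochromatic quadruple, I would use the multiplicative syndeticity of $C_i$ to pick $f_0\in F$ with $y/f_0\in C_i$, and then try to form $\{X, Y, XY, X+Y\}\subseteq C_i$ by setting $X=x/f_0$ and $Y=y/f_0$. The equalities $X+Y=(x+y)/f_0$ and $XY=xy/f_0^2$ reduce the problem to showing that $c(x/f_0)$, $c(xy/f_0^2)$, and $c((x+y)/f_0)$ all equal $i$; the first and third would follow from the refined Moreira output (after arranging $1/f_0\in F$ by enlarging $F$ if necessary), and the second can be controlled by further enlarging $F$ to also include $1/f_0^{2}$.

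The main obstacle is forcing the rescaled color $c(x/f_0)$ to coincide with $i = c(x)$: the refined Moreira output only guarantees that the colors $c(fx)$, $c(f\cdot xy)$, $c(f(x+y))$ agree for each fixed $f\in F$, not that this common color is the same across different choices of $f$. I expect this coincidence must be forced either by a pigeonhole argument over a sufficiently large $F$, or by invoking a strengthening of Moreira's theorem in which one of the variables $x$ or $y$ can be prescribed to lie in a designated multiplicatively syndetic set --- in which case one could simply take $y\in C_i$ and bypass the rescaling step entirely.
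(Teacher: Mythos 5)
Your proposal shares the paper's scaffolding (refine the coloring using the syndeticity witnesses, invoke Moreira's theorem, then use syndeticity to absorb $y$), but there is a genuine gap that you yourself flag, and the fix is different from what you suspect. The difficulty with rescaling both coordinates to $X=x/f_0$, $Y=y/f_0$ is twofold. First, $XY = xy/f_0^2$ is outside the reach of your refined coloring $\tilde c(z)=(c(fz))_{f\in F}$: enlarging $F$ to include $1/f_0^2$ is circular, since $f_0$ depends on $y$, which is produced \emph{after} $F$ is fixed and the refined coloring is set. Second, even for $X$ and $X+Y$, the $\tilde c$-monochromaticity of $\{x,xy,x+y\}$ only says that the common value of $c(x/f_0)$, $c(xy/f_0)$, $c((x+y)/f_0)$ is some color $i'$; nothing forces $i'=i=c(x)$. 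Neither pigeonhole nor a strengthening of Moreira that controls the location of $y$ is what the paper uses.

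The missing idea is to invoke the stronger form of Moreira's theorem that yields, for the refined coloring, a monochromatic set of the form $\{x',\,x'y\}\cup\{x'+cy : c\in C\}$ for any prescribed finite set $C$ of constants (in the paper $C=\{1/f : f\in F\}$), and then to \emph{keep $y$ unrescaled}. Concretely, refine the coloring by assigning to each $z$ a tuple $(f_1,\ldots,f_n)\in F^n$ with $z/f_i\in C_i$ for every $i$ (note this is a slightly different tuple than yours, indexed by colors rather than by elements of $F$). Apply the shifted Moreira theorem to this refined coloring to obtain $x',y$ with $\{x',x'y\}\cup\{x'+cy : c\in C\}$ monochromatic of color $(f_1,\ldots,f_n)$. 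If $y\in C_j$, set $x=x'/f_j$; then $x=x'/f_j\in C_j$ and $xy=x'y/f_j\in C_j$ directly from the tuple, and $x+y=(x'+f_jy)/f_j\in C_j$ because $x'+f_jy$ is one of the shifted Moreira terms and therefore also has $z/f_j\in C_j$. This sidesteps both problems you encountered: there is no quadratic $f_0^2$ term, and the color $j$ is chosen to match $y$ from the outset rather than hoping it coincides with $c(x)$.
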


\begin{proof}
By definition, for each $i\in [n]$ there is a finite set $F_i\subseteq\field$ so that $\field\setminus\{0\}=F_i(C_i\setminus\{0\})$.  Let $F=\bigcup_{i=1}^n F_i.$  Then for each $x\in \field\setminus\{0\}$ there is a tuple $(f_1,\ldots,f_n)\in F^n$ so that $x\in f_iC_i$ for each $i\in [n].$  Considering a new coloring of elements of $\field\setminus\{0\}$ based on this tuple, by theorem \cite[Theorem 7.2]{moreira2017monochromatic} we get nonzero $x',y$ for which $\{x',x'y,x'+\frac{y}{f}:f\in F\}$ is monochromatic with color $(f_1,\ldots,f_n)$.  Suppose that $y\in C_j$.  Then letting $x=x'f_j,$ we get $\{x,y,xy,x+y\}\subseteq C_j$ is as desired.
\end{proof}

\begin{comment}
Notice that in the above proof we made no attempt to control the color of $y,$ and instead shifted the other terms found via Moreira's theorem into whichever color class $y$ happened to belong.  Instead, we could use the fact that the $y$ from Moreira's theorem can be chosen from any multiplicatively central set (this follows from \cite{bowen2022monochromatic} Theorem 3.1 and the proof of \cite{moreira2017monochromatic} Theorem 5.1) to deduce the following from the same argument.

\begin{cor}\label{central}
Any subset of $K$ that is both syndetic and central contains a  subset of the form $\{x,y,xy,x+y\}.$  In particular, any set that is both syndetic and thick contains such a subset.
\end{cor}
\end{comment}
The other extreme case is when the colors are all thick. For the sake of simplicity we only present the two color case here; the proof extends naturally to more colors, albeit with more complicated notation.

\begin{claim}\label{thick}
Suppose that $\field=C_1\cup C_2$, where both $C_i$ are thick.  There is a monochromatic set of the form $\{x,y,xy,x+y\}.$
\end{claim}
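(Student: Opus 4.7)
The plan is to combine the additive density machinery of Theorem \ref{bergelson} with the multiplicative preparation provided by Lemma \ref{prod}. The former will secure, for each $y$ in a suitable ``good'' set, that there are many $x$ with $x, x+y\in C_1$; the latter will guarantee that whenever $x$ and $y$ are drawn from matching IP$_r$ subsets of the same color, the product $xy$ lands in that color automatically. Having both colors thick is crucial: it allows the argument to be run in either color, with room to hedge.

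First I would fix an additively invariant mean $d$ on $(\field,+)$. By pigeonhole one color has $d$-density at least $1/2$; WLOG it is $C_1$. I would then invoke Theorem \ref{bergelson} with $s=2$, $\alpha=1/2$, and homomorphisms $\varphi_1(c)=c$, $\varphi_2(c)=qc$, where $q$ is a parameter, obtaining constants $r$ and $\alpha'>0$ independent of $q$ such that for every $q\in\field\setminus\{0\}$,
\[
B_q := \{c : d(C_1 \cap (C_1 - c) \cap (C_1 - qc)) > \alpha'\}
\]
is IP$_r^*$. Next I would apply Lemma \ref{prod} with $k=2$, $N=2$, $T_i=C_i$, and this $r$, to extract IP$_r$ sets $S_{i,0}, S_{i,1}\subseteq C_i$ (for $i\in\{1,2\}$) with $S_{l_0,0}\cdot S_{l_1,1}\subseteq C_{l_0}$ for all $l_0,l_1\in\{1,2\}$. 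I would then pick $x\in S_{1,0}$ (so $x\in C_1$) and $y\in B_x\cap S_{1,1}$ (nonempty by the IP$_r^*$/IP$_r$ pairing), which automatically places $y$ and $xy$ in $C_1$; positivity of $d(C_1\cap(C_1-y)\cap(C_1-xy))$ then yields some $z\in C_1$ with $z+y$ and $z+xy$ in $C_1$.

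The main obstacle is promoting the abstract density witness $z$ to the concrete element $x\in S_{1,0}$, so as to close the configuration $\{x,y,xy,x+y\}\subseteq C_1$. The two sources of largeness in play --- additive density (for $z$) and multiplicative IP$_r$ structure (for $x$) --- are not \emph{a priori} compatible. I would attempt to bridge this gap by running the argument in parallel with $C_1$ replaced by $C_2$: if the $C_1$-case fails, the density witnesses for the obstruction should lie in $C_2$, and the symmetric application with $S_{2,0}, S_{2,1}$ produces the desired monochromatic set in $C_2$. Making this dichotomy precise --- in particular, choosing $r$ (and possibly enlarging $N$ in Lemma \ref{prod}) so that the IP$_r$ sets inside each color class are rich enough to realize the required coincidences, and augmenting the Bergelson--Glasscock step with additional homomorphisms so that the identity $z=x$ is forced for $x\in S_{1,0}\cap B_x$ --- will be the delicate technical core of the argument, and is exactly the kind of bookkeeping that Section \ref{sec:thick} was designed to support.
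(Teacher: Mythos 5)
Your plan has the right ingredients (Bergelson--Glasscock for additive control, multiplicative preparation for the product) but assigns the roles of the variables incorrectly, and the gap you flag at the end is a genuine one that your proposed fixes do not close.

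Concretely, you take \emph{both} $x$ and $y$ from IP$_r$ sets so that $x$, $y$, and $xy$ land in $C_1$ automatically, and you then use Theorem \ref{bergelson} to produce a density-positive set of auxiliary witnesses $z$ with $z,z+y,z+xy\in C_1$. But there is no mechanism in the Bergelson--Glasscock machinery to force the density witness $z$ to coincide with a prescribed IP$_r$ element $x$: density positivity and IP$_r$-membership are simply incomparable notions of largeness, and the theorem only delivers a \emph{set} of translates $c$ (your $B_q$), never a statement about a particular preselected $x$. Adding more homomorphisms cannot enforce ``$z=x$''; the conclusion of Theorem \ref{bergelson} is always about the mean of an intersection. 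The suggestion to rerun the argument with $C_2$ in place of $C_1$ also fails for a more basic reason: $C_2$ may have $d$-density zero (thickness alone does not imply positive density), so the Bergelson--Glasscock input is unavailable there. The paper's proof avoids both problems by swapping the roles of $x$ and $y$: it takes $y$ from the IP$_r$ sets and $x$ from the density-positive set $C_1'$ supplied by Theorem \ref{bergelson}, so $x,x+y\in C_1$ and $y\in C_1$ come for free, while the uncontrolled product $xy$ drives a dichotomy. If $xy\in C_1$ for some such $x$, done; if $y C_1'\subseteq C_2$, one does a second Bergelson--Glasscock step with two multiplicative dilations (still applied to $C_1'$, so no density is needed in $C_2$) and picks $y_2$ from an IP$_{r_2}$ set inside $C_2$, after which a second dichotomy on the location of $y_1y_2 C_1''$ finishes in either $C_1$ or $C_2$. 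That role-swap and the ensuing dichotomy are precisely the missing idea.
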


\begin{proof}
Let $d$ be an additively left-invariant mean on $\field$. 
Without loss of generality, we may assume that $d(C_1)>0$. Let $\alpha=d(C_1)$.  
Apply Theorem \ref{bergelson} with $\alpha=d(C_1)$ and $s=1$, to get $r_1$ and $\alpha'$ and apply it again with $\alpha'$ and $s=2$ to get $r_2$.

Since $C_2$ is thick, we can find an IP$_{r_2}$ set $S_2$ contained in $C_2$. Using the fact that $C_1$ is thick, we can find an IP$_{r_1}$ set $S_1$ contained in $C_1$ such that %$S_1 S_2\subseteq  C_1$.
\begin{equation}\label{zero}
S_1 S_2\subseteq  C_1
\end{equation}
Now, since $S_1$ is IP$_{r_1}$, applying Theorem \ref{bergelson} to the homomorphism  $x\mapsto x$ we find $C_1'\subseteq  C_1$ with $d(C_1')=\alpha'$ and $y_1\in S_1$ such that 
\begin{equation}\label{first}
C_1'+y_1\subseteq  C_1.    
\end{equation}

Write $D_1=y_1C_1'$. If $D_1\cap C_1\not=\emptyset$, then pick any $x\in C_1'$ such that $xy_1\in C_1$ and put $y=y_1$. Note that (\ref{first}) implies $x+y\in C_1$, so $\{x,y,xy,x+y\}\subseteq C_1$.

Thus, we can assume that $D_1\subseteq  C_2$. Since $S_2$ is IP$_{r_2}$, applying Theorem \ref{bergelson} to the two homomorphisms  $x\mapsto y_1 x$ and $x\mapsto \frac{1}{y_1}x$ we find  $C_1''\subseteq  C_1'$ and $y_2\in S_2$ such that
\begin{equation}\label{third}
    C_1''+y_1 y_2\subseteq  C_1'
\end{equation}
\begin{equation}\label{second}
    C_1''+\frac{y_2}{y_1}\subseteq  C_1'
\end{equation}

%$$C_1''+y_1 y_2\subseteq  C_1'$$

Note that $y_1 y_2\in C_1$ by (\ref{zero}). 

If $y_1 y_2 C_1''\cap C_1\not=\emptyset$, then put $y=y_1 y_2$ and choose $x\in C_1''$ such that $xy\in C_1$. Note that (\ref{third}) implies that $x+y\in C_1$. Thus, $\{x,y,xy,x+y\}\subseteq C_1$.

Otherwise, $y_1y_2 C_1''\cap C_2\not=\emptyset$. Put $y=y_2$ and let $x\in y_1 C_1''$ be such that $xy\in C_2$.  Since $y_1 C_1''\subseteq D_1\subseteq C_2$ we have $x\in C_2$. Note that (\ref{second}) implies that $x+y\in C_2$. Thus, in this case $\{x,y,xy,x+y\}\subseteq C_2$.

%Without loss of generality, we may assume that $d(C_0)>0.$  For $r$ large enough depending on this density, by Lemma \ref{prod} we may find $IP_r$ sets $S_0\subseteq C_0$ and $S_1\subseteq C_1$ so that $S_0\cdot S_1\subseteq C_0.$  

%Applying Theorem \ref{bergelson} we find a $y_0\subseteq S_0$ so that $d(C_0')>\beta>0,$ where $C_0'=C_0\cap(C_0-y_0)$ and $\beta$ does not depend on $S_0.$  Note that if $y_0C_0'\cap C_0\neq \emptyset$ then we'd be done, so we can assume that this set is a subset of $C_1.$  

%Applying Theorem \ref{bergelson} again, we find a $y_1\subseteq S_1$ so that $$C_1'=C_0'\cap (C_0'-y_0y_1)\cap(C_0'-\frac{y_1}{y_0})\neq \emptyset.$$

%Then for $x\in y_0C_1'\subseteq  C_1,$ we have $\{x,y_1,x+y_1\}\subseteq C_1$ as $y_0(\frac{x+y_1}{y_0})\in y_0C_1'\subseteq  C_1,$ so we may assume that $y_0y_1C_1'\subseteq C_0.$

%Finally, let $x\in C_1'\subseteq C_0$ and $y=y_0y_1\in C_0.$  Then $\{x,y,xy,x+y\}\subseteq C_0$ as desired.
\end{proof}

In fact, Claims \ref{synd} and \ref{thick} alone can be used to show that any $2$-coloring of $\field$ contains a monochromatic set $\{x,y,xy,x+y\}$ and this was the original motivation of the proof below.

\begin{comment}
Together with the above results, we can deduce the $2$-color case of Theorem \ref{main simple} by using the following simple trichotomy.

\begin{claim} \label{trichotomy}
In any $2$-coloring of $K$ one of the following holds:

\begin{enumerate}
    \item Both color classes are thick.
    \item Both color classes are syndetic.
    \item One color class is both thick and syndetic.
\end{enumerate}

\end{claim}

\begin{proof}
Fix $K=A\cup B,$ and assume that $A$ is not thick.  Then $B$ is syndetic.  If $B$ is not thick, then $A$ must also be syndetic and we are in case (2).  Otherwise we are in case (3).
\end{proof}

\begin{cor}
Any $2$-coloring of $K$ contains a monochromatic set of the form $\{x,y,xy,x+y\}.$
\end{cor}

\begin{proof}
By Claim \ref{trichotomy} we may conclude by applying Claim \ref{synd}, Corollary \ref{central}, or Claim \ref{thick} respectively.
\end{proof}

The main difficulty with extending this proof to more colors was finding a generalization of trichotomy \ref{trichotomy} that was compatible with the arguments in Claims \ref{synd} and \ref{thick}.  This is accomplished by Theorem \ref{cover} in the next section.

\subsection{The main result}
\hspace{1mm}
\\
\end{comment}
\subsection{The general case}
In this subsection we prove our main result.

\begin{theorem}\label{m2}
For any finite coloring of $\field$ %contains a monochromatic set of the form $\{x,y,xy,x+y\}$. 
there exist  nonzero $y$ and infinitely many $x\in\field$ such that the tuples $\{x,y,xy,x+y\}$ are monochromatic.
\end{theorem}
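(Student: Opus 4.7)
The plan is to extend the strategy of Claim \ref{thick} to finite colorings of arbitrary cardinality, replacing the thickness of each color class with the localization afforded by Lemma \ref{cover}. First apply Lemma \ref{cover} to obtain index sets $Y_1,\ldots,Y_k\subseteq[n]$ and a finite $F\subseteq\field\setminus\{0\}$ (assume $1\in F$). Fix an additive invariant mean $d$ and a color $C_{i_0}$ with $d(C_{i_0})=\alpha>0$. For brevity write $\Pi_{i,j}=y_iy_{i+1}\cdots y_j$. Iterate Theorem \ref{bergelson} backwards from a top step $N$ (depending on $n,k,|F|$), at each step $j$ accounting for a finite family of homomorphisms $c\mapsto(\Pi_{i+1,j-1}/\xi)\cdot c$ indexed by $i<j$ and $\xi\in\{\Pi_{1,i}/f:f\in F\}$; Lemma \ref{prod} then yields nested IP$_{r_j}$ sets $S_{l,j}\subseteq C_{Y_l}$ with the product-in-$C_{Y_l}$ property.

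Inductively build $y_j\in S_{l_j,j}$ and a descending chain $C_{i_0}=C_0'\supseteq C_1'\supseteq\ldots\supseteq C_N'$ of positive-density subsets. At step $j$, let $l_j$ be the index furnished by Lemma \ref{cover}(ii) applied to the partial product $\Pi_{1,j-1}$, and use Theorem \ref{bergelson} to obtain $y_j$ and $C_j'$ satisfying shift relations $C_j'+\Pi_{i+1,j}/\xi\subseteq C_{j-1}'$ for each relevant $\xi$. Further refine $C_j'$ by iterated pigeonhole over the finitely many maps $z\mapsto\xi z$ (preserving positive density), so that each such map has constant color on $C_j'$.

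After $N$ steps, a pigeonhole on a sufficiently rich signature at each step (recording the Lemma \ref{cover}(ii) index of $\Pi_{1,j}$, the $F$-representatives of $\Pi_{1,j}$, and the associated colors of the $z\mapsto\xi z$ maps) produces indices $i<j$ with matching data and common type $l^*$. Set $y=\Pi_{i+1,j}$ (which lies in $C_{Y_{l^*}}$ by Lemma \ref{prod}, with specific color $m_0\in Y_{l^*}$) and $x=\xi_iz$ for $z\in C_N'$ and $\xi_i\in C_{m_0}$ obtained from the representation $\Pi_{1,i}=f\xi_i$ given by Lemma \ref{cover}(ii). Then $xy=\xi_jz$, and the shift relation gives $x+y\in\xi_iC_{j-1}'$; the color-constancy refinements then control the colors of $x,xy,x+y$. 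The main obstacle lies in aligning these three colors with the specific color $m_0$ of $y$; this is overcome by enriching the pigeonhole signature to record the multiplicative-shift colors alongside the $F$-representative data, so that the flexibility provided by Lemma \ref{cover}(ii) suffices to force the alignment. Infinitely many valid $x$ come from the positive density of $C_N'$.
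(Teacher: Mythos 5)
Your proposal follows the same high‑level skeleton as the paper (Lemma \ref{cover} $+$ Lemma \ref{prod} $+$ Theorem \ref{bergelson}, iterated $N$ times, followed by a pigeonhole on repeated ``signatures''), but it deviates at the one point that actually carries the argument, and the deviation creates a gap you acknowledge but do not close.

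The crux is your \emph{alignment} step. You start the descending chain inside a single original color $C_{i_0}$ and, after refinement, record only the $C$-colors $c(\xi)$ of the maps $z\mapsto\xi z$ on $C_j'$, where $\xi=\Pi_{1,i}/f$. At the end you need $\xi_{m_0}z\in C_{m_0}$, i.e.\ $c(\xi_{m_0})=m_0$, where $m_0$ is the color of $y=\Pi_{i+1,j}$. Nothing in the construction forces the map $m\mapsto c(\xi_m)$ to have a fixed point at $m_0$ (or anywhere): $\xi_m\in C_m$ and $z\in C_{i_0}$, but a product of elements of prescribed colors has no prescribed color. Matching signatures at steps $i$ and $j$ only gives $c_m^{(i)}=c_m^{(j)}$, which is consistency between the two steps — it says nothing about equality with $m_0$. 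Lemma \ref{cover}(ii) applied to the single element $\Pi_{1,j}$ cannot supply the missing identity either, because the quantity whose color you need to control at the end is $\xi_m z$ with $z$ ranging over a large set, not the lone point $\Pi_{1,j}$.

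The paper resolves exactly this issue by never separating ``color of the element'' from ``color of its $F$-shifts.'' It passes immediately to the \emph{augmented coloring} $D_{(l,f_1,\ldots,f_n)}=\{w:\ w\in f_m C_m\ \text{for all}\ m\in Y_l\}$, chooses $A_1$ to be a $D$-color of density $\geq 1/K$, and maintains as an invariant that $x\,y_1\cdots y_j$ lies in a \emph{fixed} $D$-color for all $x\in A_{j+1}$ (property (\ref{pigeonhole})). Because membership in $D_{(l,f_1,\ldots,f_n)}$ \emph{by definition} encodes, for every $m\in Y_l$, that the $f_m$-shift lands in $C_m$, the map $m\mapsto$ ``color of the $f_m$-shift'' is the identity on $Y_l$ automatically, and no alignment argument is needed: once $y\in C_m$ for some $m\in Y_l$, the same $m$ instantly works for $x$, $xy$, and $x+y$. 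So the fix for your proposal is not a bigger signature but a different starting point: replace $C_{i_0}$ and the per-$\xi$ $C$-color tracking by the $D$-coloring and track the $D$-color of $x\Pi_{1,j}$ over the positive-density chain — which is precisely the paper's construction.

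There are also two smaller issues worth flagging. First, you apply Lemma \ref{cover}(ii) to the fixed element $\Pi_{1,j-1}$ to choose $l_j$; the paper instead lets the index be chosen by pigeonhole on the density of $\{x\in A_j' : x\Pi_{1,j}\in D_{(l,\ldots)}\}$, since it is the behaviour of $x\Pi_{1,j}$ over a positive-density set, not a single point, that matters. Second, with $y=\Pi_{i+1,j}$ and $y_{i+1}\in S_{l_{i+1},i+1}$, Lemma \ref{prod} places $y$ in $\bigcup_{m\in Y_{l_{i+1}}}C_m$, while your pigeonhole only matches $l_i=l_j$; you need $y$ to start with a factor drawn from the $S_{l^*,\bullet}$ family whose first index equals the matched $l^*$, which requires coordinating the index-offsets (the paper's choice $y=y_i\cdots y_{j-1}$ does this cleanly).
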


%Before beginning, recall that in the previous section we used trichotomy \ref{trichotomy} to reduce the $2$-color case of Theorem \ref{main simple} to the cases where both color classes are syndetic or both are thick.  We will employ a similar strategy in the proof of Theorem \ref{m2}, but we will need to be more subtle to account for the fact that we are dealing with arbitrary colorings.  For this our main new tool is the following structure theorem for colorings, which was originally discovered by analysing minimal left ideals in $(\beta \mathbb{Q},\cdot),$ but which we now present in somewhat simpler combinatorial form. 

%\vspace{1mm}

%{\color{red} To make the notation simpler, maybe the field could be in bold or blackboard board?  At least, we definitely shouldn't have $F$ be a field and $G$ a finite subset of $F$.  Also, it's probably best not to use superscripts with elements of the finite subset of the field. 

%I'd also prefer to start the $y_i$ from $0$ so that $y_i\in C_m$ for some $m\in Y_i$ instead of $Y_{i-1}.$}

Note that the above statement, in particular, implies that all elements in the quadruple can be chosen to be distinct. First, we can assure that $y\not=1$ by modifying the coloring by giving $1$ a separate color, and then, using the fact that for a fixed $y\not=1$ the equation $xy=x+y$ has one solution, we can choose $x$ so that all numbers $x,y,xy,x+y$ are distinct.

In the proof we will use Lemma \ref{cover} to carry out the proofs of Claims \ref{synd} and \ref{thick} simultaneously.

\begin{proof}[Proof of Theorem \ref{m2}]
Let %$\field$ is an infinite field and let 
$d$ be an additive invariant mean on $\field$. Suppose $\field$ is colored into $n$ colors and write $\field\setminus\{0\}= C_1\cup\ldots\cup C_n$. Using Lemma \ref{cover} for the colors $C_i$ find $k\in\mathbb{N}$ and finite sets $F\subseteq \field\setminus\{0\}$ and $Y_1,\ldots,Y_k\subseteq [n]$ %and thick sets $T_1,\ldots,T_k\subseteq \field$ 
such that
  \begin{itemize}
  
  \vspace{2mm}
      \item[(i)] for each $l \leq k$ the set $\bigcup_{m\in Y_l} C_m$ is thick,
      
      \vspace{2mm}
      
      \item[(ii)] for each $x\in \field\setminus\{0\}$ there exists $l\leq k$ such that for each $m\in Y_l$ we have $x\in F\cdot C_m$.
  \end{itemize}
%  \begin{itemize}
%  \vspace{2mm}
%      \item $\bigcup_{m\in Y_l} C_m$ is thick for each $l \leq k$,      
%      \vspace{2mm}      
%      \item for any $x\in \field\setminus\{0\}$ there is  $l\leq k$ so that $x\in F\cdot C_m$ for each $m\in Y_l$.
%  \end{itemize}
This means that for each $x\in \field\setminus\{0\}$ there is $l\leq k$ and a tuple $f_1,\ldots,f_n\in F$ so that  for each $m\in Y_l$ we have $x\in f_m C_m$. 

Note that there are finitely many tuples $(l,f_1,\ldots,f_n)$ with $l\leq k$ and $f_1,\ldots,f_n\in F$ and this gives a new coloring of $\field\setminus\{0\}$ where for each tuple $(l,f_1,\ldots,f_n)$ as above we have a color $D_{(l,f_1, \ldots ,f_n)}$ and 
we put $$x\in D_{(l,f_1,\ldots,f_n)}\quad\mbox{ if }\quad x\in f_m C_m\mbox{ for each } m\in Y_l.$$ Write $K$ for the number of colors in the above coloring. These colors do not need to be disjoint but we can always disjointify them.
% and the $n$'th tuple $(f_1,\ldots,f_{Y_i})$ of $f\in F$.  

Let $N\in\mathbb{N}$ be a large enough. Let $s\in\mathbb{N}$ be a large number, depending on $N$. There exists $r\in\mathbb{N}$ such that we can iteratively apply Theorem \ref{bergelson} $N$ many times as follows. We define a sequence of positive numbers $\alpha_1,\alpha_1'\ldots,\alpha_N,\alpha_N'$ with $\alpha_1=\frac{1}{K}$. At the $j$-th step, given $\alpha_j$ and $s$ we apply Theorem \ref{bergelson} and obtain $\alpha_j'$ such that for any set $A\subseteq \field$ with $d(A)\geq\alpha_j$ and for any $q_1,\ldots,q_s\in\field$ the set
\begin{equation}\label{berg.glas}
    \{y\in \field: d(\{x\in A: x+q_1 y\in A,\ldots, x+q_s y\in A \})>\alpha'\}\mbox{ is IP}_r^*.%\tag{$\star$}
\end{equation}%\{y\in \field: d(\{x\in A_j: x+q_1 y\in A_j,\ldots, x+q_s y\in A_j \})>\alpha'\}$$
%is IP$_r^*$. 
We put $\alpha_{j+1}=\frac{\alpha_j'}{K}$.
%We first construct a sequence of real numbers $\alpha_0,\ldots,\alpha_N>0$ and natural numbers  $r_0,\ldots,r_N$ as follows {\color{red} Being $IP_r$ is monotone, so I don't think there's any reason to keep track of the $r_i.$} {\color{purple} we don't keep track of them and take max in the next paragraph}. Put $\alpha_0=\frac{1}{k|G|^n}$ and $r_0=1$. Given $\alpha_j$ apply Theorem \ref{bergelson} with  $s=|G|\cdot(j+1)$ to find $r_{j+1}$ as well as $\alpha_j'$. Put $\alpha_{j+1}=\frac{\alpha_j'}{k|G|^n}$.

%Let $r=\max\{r_0,\ldots, r_N\}$.  
Since the sets $\bigcup_{m\in Y_1} C_m,\ldots, \bigcup_{m\in Y_k} C_m$ are thick, by Lemma \ref{prod} there are IP$_r$ sets $S_{1,j}\subseteq \bigcup_{m\in Y_1} C_m,\ldots,S_{k,j}\subseteq \bigcup_{m\in Y_k} C_m$ for each $j<N$ such that %{\color{purple} change superscripts to subscript}
\begin{equation}\label{products.applied}
S_{l_i,i}\cdot S_{l_{i+1},i+1}\cdot\ldots\cdot S_{l_j,j}\subseteq  \bigcup_{m\in Y_{l_i}}C_m    %\tag{$\star\star$}
\end{equation}
 for any $i\leq j\leq N$ and any choice $l_i,l_{i+1},\ldots,l_j\leq k$

%{\color{purple} I changed the indexing to start with 1 rather than 0}
We inductively define
\begin{itemize}
    \item a sequence of subsets $A_1\supseteq\ldots\supseteq A_N$ of $\field$,
    \item finite sets $Q_1,\ldots,Q_N\subseteq  \field$, %of size at most $s$,
    \item tuples $(l_1,f_{1,1},\ldots,f_{n,1}),\ldots, (l_N,f_{1,N},\ldots,f_{n,N})$ such that  $l_j\leq k$ and $f_{1,j},\ldots,f_{n,j}\in F$ for every $j\leq N$, 
    \item and elements $y_1,\ldots,y_{N-1}\in \field\setminus\{0\}$,
\end{itemize}
such that for every $j< N$ we have 

\begin{enumerate}
   % \item $d(A_j)=\alpha_j$, 
%    \item $y_{a,b}:=y_a\cdot\ldots\cdot y_{b-1}$ for each $0\leq a<b\leq N.$
    %\item $Q_j=\{\frac{y_{i+1}\cdot\ldots\cdot y_j}{fy_0\cdot\ldots\cdot y_i}:  f\in F, i<j\}$,
    
    \item\label{szemeredi} $A_{j+1}\subseteq \{x\in A_j\cap\bigcap_{q\in Q_j}(A_j-qy_j)\}$,
\item\label{pigeonhole} $A_{j+1}\subseteq \{x\in A_j: xy_1\cdot\ldots\cdot y_j\in D_{(l_{j+1},f_{1,j+1},\ldots,f_{n,j+1})}\}$,
%    \item $A_j:=\{x\in A_{j-1}\cap\bigcap_{q\in Q_{j-1}}(A_{j-1}-qy_j): xy_0\ldots y_{j-1}\in D_{l_j}\}$,
%    \item $A_j\subseteq \{x\in A_{j-1}: xy_1\ldots y_{j-1}\in D_{(l_j,g_j^1,\ldots,g_j^n)}\}$
    %where $C_{i,n}$ is chosen to maximize $d(A_{t+1}).$  In this case we say that $A_{t+1}$ is $(i,n)$-good.
    \item\label{y} $y_j\in S_{l_{j},j}$. %if $d_{l_j}=(l,g_1,\ldots,g_{|Y_i|})$. %where $i$ is as in point (3).
\end{enumerate}
and $d(A_j)\geq\alpha_j$ for every $j<N$ with  $$Q_j=\{\frac{y_i\cdot\ldots\cdot y_{j-1}}{fy_1\cdot\ldots\cdot y_{i-1}}:  f\in F, 1\leq i<j\}$$ for every $j< N$, where by convention we write $y_1\cdot\ldots\cdot y_{0}=1$ and $Q_1=\{\frac{1}{f}:f\in F\}$.
%if $i=1$ then $fy_1\cdot\ldots\cdot y_{i-1}=f$ and $Q_1=\{\frac{1}{f}:f\in F\}$. 
Note that $|Q_j|$ only depends on $N$ and $F$ and so we can assume that $s\in\mathbb{N}$ is chosen large enough in advance so that $|Q_j|\leq s$ for each $j<N$.

To start off, we choose $(l_1,f_{1,1},\ldots,f_{n,1})$ such that $d(D_{(l_1,f_{1,1},\ldots,f_{n,1})})>\alpha_1$ and put $A_1= D_{(l_1,f_{1,1},\ldots,f_{n,1})}$. %and $d(A_1)\geq\alpha_1$. 
Such $(l_1,f_{1,1},\ldots,f_{n,1})$ can be found by our choice of $\alpha_1$. %We put $Q_1=\{\frac{1}{f}:f\in F\}$.

%case condition says that 

%deal with the base case we put $Q_{-1}=\{0\}$, $A_{-1}=\field$ and $y_{-1}=1$. {\color{red} why?}

%To start off, we %put %$j_0=1$ {\color{red} $r_0$?} and 
%$Q_0=\{1\}$ and 
%find $(l_0,g_0^0,\ldots,g_0^n)$ such that for $A_0=D_{(l_0,f_0^0,\ldots,f_0^n)}$ we have $d(A_0)\geq\alpha_0$. The latter is possible by our choice of $\alpha_0$.

Now we describe the induction.
Suppose that $A_j$, $Q_j$, $(l_j,f_{1,j}\ldots,f_{n,j})$ as well as $y_{j-1}$ have been constructed. We proceed to find $y_j$ as well as $A_{j+1}$, $Q_{j+1}$ and $(l_{j+1},f_{1,j+1}\ldots,f_{n,j+1})$.

First, since we can assume that $s$ is chosen big enough so that $|Q_j|\leq s$, and since the set $S_{l_j,j}$ is IP$_r$, by (\ref{berg.glas}) we can find $y_j\in S_{l_{j},j}$ and $A_j'\subseteq  A_j$ such that $d(A_j')=\alpha_j'$ and for every $x\in A_j'$ we have $$x+qy_{j}\in A_j \quad\mbox{for all }q\in Q_j.$$ 

Second, look at $y_1\cdot\ldots\cdot y_j A_j'$ and note that for some $(l_{j+1},f_{1,j+1},\ldots,f_{n,j+1})$ the set $\{x\in A_j': xy_1\cdot\ldots\cdot y_j\in D_{(l_{j+1},f_{1,j+1},\ldots,f_{n,j+1})}\}$ has density at least $\alpha_{j+1}$, by our choice of $\alpha_{j+1}$. Put $$A_{j+1}=\{x\in A_j': xy_1\cdot\ldots\cdot y_j\in D_{(l_{j+1},f_{1,j+1},\ldots,f_{n,j+1})} \}.$$

%so long as $r$ is chosen large enough in advance.

So long as $N$ was chosen large enough, by the pigeonhole principle we find $i<j<N$ so that 
for some $(l,f_1,\ldots,f_n)$ we have
\begin{equation}\label{pig}
    (l_i,f_{1,i},\ldots,f_{n,i})=(l_j,f_{1,j},\ldots,f_{n,j})=(l,f_1,\ldots,f_n).
\end{equation}
%$(l_i,f_{1,i},\ldots,f_{n,i})=(l_j,f_{1,j},\ldots,f_{n,j})=(l,f_1,\ldots,f_n)$.

Let $y=y_i\ldots y_{j-1}$. By the property (\ref{y}) and (\ref{products.applied}), there exists  $m\in Y_{l}$ such that $y\in C_m$. % such that $m\in Y_i$.  
Let $x'\in A_{j}$ and put $x=f_m x' y_1\cdot\ldots\cdot y_{i-1}$.  

Since $A_{j}\subseteq  A_{i}$,  by the property (\ref{pigeonhole}) at step $i$ of the construction and we know that 
$$x' y_1\cdot\ldots\cdot y_{i-1}\in D_{(l,f_1,\ldots,f_n)},$$ so $x=f_m x'y_1\cdot\ldots\cdot y_{i-1}\in C_m$ by (\ref{pig}). 

By property (\ref{pigeonhole}) at step $j$ we know that $$x' y_1\cdot\ldots\cdot y_{j-1}\in D_{(l,f_1,\ldots,f_n)},$$ so $xy=f_m x' y_1\cdot\ldots\cdot y_{j-1}\in C_m$. %\subseteq  \frac{C_{i,n}}{f_j}\subseteq C_j,$ and similarly $xy\in \frac{A_ty_{0,t}}{f_j}\subseteq \frac{C_{i,n}}{f_j}\subseteq  C_j.$   

Finally, by property (\ref{szemeredi}) we have $A_{j}\subseteq  A_{j-1}\cap \bigcap_{q\in Q_{j-1}}(A_{j-1}-qy_{j-1})\subseteq  A_{i}\cap \bigcap_{q\in Q_{j-1}}(A_i-qy_{j-1})$, so $$x'+q y_{j-1}\in A_i$$ for every $q\in Q_{j-1}$. Put $$q=\frac{y_{i}\cdot\ldots\cdot y_{j-2}}{f_m y_1\cdot\ldots\cdot y_{i-1}}.$$ Then by (\ref{pigeonhole}) again, since $x'+q y_{j-1}\in A_i$, we have $y_1\cdot\ldots\cdot y_{i-1}(x'+q y_{j-1})\in D_{(l,f_1,\ldots,f_n)}$, i.e., $$x'y_1\cdot\ldots\cdot y_{i-1}+\frac{1}{f_m}y_i\ldots y_{j-1}=\frac{1}{f_m}x+\frac{1}{f_m}y\in D_{(l,f_1,\ldots,f_n)},$$ which means that $x+y\in C_m$. %\frac{y_{0,s}A_s}{f}\subseteq  \frac{C_{i,n}}{f_j}\subseteq  C_j$ for any $h\in H.$

\end{proof}

%\begin{remark}\label{many}
%The above result extends to find monochromatic sets of the form $\{x_i,\sum_{i\leq m} x_i, \prod_{i\leq m} x_i: i,m\leq n\}$ for any $n\in\mathbb{N}$ by slightly modifying the sets $Q_i$ and running the inductive construction for longer.  The necessary modifications to the $Q_i$ are similar to the ones used in \cite{bowen2022monochromatic} Theorem 1.6, so we omit the details here. 
%\end{remark}

\subsection{Other fields}
Now, we give the short proof of Corollary \ref{finite.fields}, using a standard compactness argument.

\begin{proof}[Proof of Corollary \ref{finite.fields}]
    Write $T_F$ for the theory of fields. Fix $n$ and let $L$ be the language of the theory of fields together with $n$ unary predicates $C_1,\ldots, C_n$. By Theorem \ref{m2} the sentence $$\sigma=\big(\forall x\bigvee_{i=1}^n C_i(x)\big)\to\big(\exists x,y\not=0 \bigvee_{i=1}^n C_i(x)\wedge C_i(y)\wedge C_i(xy)\wedge C_i(x+y)\big)$$ is satisfied in $\mathbb{Q}$ and hence in any field of characteristic zero. Thus, $\sigma$ is provable in the theory of fields of characteristic zero $$T_0=T_F\cup\{\underbrace{1+\ldots+1}_{p}\not=0:p\mbox{ prime}\}.$$ By compactness, $\sigma$ is provable from a finite subset of $T_0$, hence there exists $p_0$ such that any field of characteristic greater than $p_0$ satisfies $\sigma$.
\end{proof}

\section{Generalizations}

Here we prove a generalization of Theorem \ref{main.simple} to the setting of more variables and more complicated monochromatic patterns.

\begin{theorem}
Let $H\subseteq\{h:\field^i\to \field: i\in\mathbb{N}\}$ be a finite set of functions and let $t\in\mathbb{N}$. In any finite coloring of $\field$ there are $x_1,\ldots,x_t\in \field$ such that the following numbers have the same color: 
$$x_i\cdot\ldots\cdot x_j,\quad x_0\cdot\ldots\cdot x_i+%\sum_{i<j\leq n}h_j(x_1,\ldots,x_{j-1})x_j
h_{i+1}(x_1,\ldots,x_{i})x_{i+1}+ \ldots + h_t(x_1,\ldots,x_{t-1})x_{t}$$ for all $0\leq i\leq j\leq t$ and any $h_{i+1},\ldots,h_t\in H$ (of appropriate arity) .%for all $i\leq t$ and any $h_{i+1},\ldots,h_t\in H$ of arity $i$.
%$$\{\prod_{0\leq i\leq j \leq k\leq n}x_j, \prod_{0\leq i<j\leq n}x_i+\sum_{i<j\leq n}h_j(x_1,\ldots,x_{j-1})x_j: h_j\in H_j\} $$
\end{theorem}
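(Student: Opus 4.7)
The plan is to extend the proof of Theorem~\ref{m2} via three modifications: (i) iterate the pigeonhole argument at the end to yield $t+1$ matching indices instead of two; (ii) enlarge the shift sets $Q_j$ used in condition~(\ref{szemeredi}) so that they absorb the $h$-weighted sums; and (iii) insert a Ramsey-theoretic step to ensure that all the consecutive $y$-sub-products end up in a single color class $C_{m^*}$. The setup is identical to Theorem~\ref{m2}: apply Lemma~\ref{cover} to $\field\setminus\{0\} = C_1 \cup \cdots \cup C_n$ to obtain $k, F, Y_1, \ldots, Y_k$, pass to the refined coloring by tuples $D_{(l, f_1, \ldots, f_n)}$, fix $N \in \mathbb{N}$ large, and apply Lemma~\ref{prod} to obtain IP$_r$ sets $S_{l, j} \subseteq \bigcup_{m \in Y_l} C_m$ satisfying~(\ref{products.applied}). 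Then inductively construct $y_j \in S_{l_j, j}$, color tuples $(l_j, f_{1, j}, \ldots, f_{n, j})$, and nested dense sets $A_1 \supseteq \cdots \supseteq A_N$ via Theorem~\ref{bergelson}, maintaining the analogues of~(\ref{szemeredi})--(\ref{y}) and $d(A_j) \geq \alpha_j$.

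The key modification is to enlarge $Q_j$ so that it includes every shift of the form
\[
\frac{1}{f} \cdot \frac{h(z_1, \ldots, z_{u-1})\, y_p \cdots y_{j-1}}{y_1 \cdots y_{p'-1}}
\]
where $f \in F$, $u \leq t$, $h \in H$ has arity $u-1$, $1 \leq p' \leq p \leq j$, and $(z_1, \ldots, z_{u-1})$ ranges over every decomposition of some consecutive tail of $(y_1, \ldots, y_{p-1})$ into $u-1$ consecutive-block products. Since $H$, $F$, $t$ are all fixed and finite, $|Q_j|$ remains polynomial in $j$ and one picks $s \geq \max_{j < N} |Q_j|$ at the outset so that Theorem~\ref{bergelson} applies uniformly. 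After the induction, ordinary pigeonhole produces $M$ indices $b_1 < \cdots < b_M$ sharing a common refined color $(l, f_1, \ldots, f_n)$, with $M$ arbitrarily large for $N$ large enough. Color each pair $\{b_u, b_v\}$ by the unique $m \in Y_l$ with $y_{b_u} \cdots y_{b_v-1} \in C_m$ (well-defined by~(\ref{products.applied}) after disjointing the $C_m$); Ramsey's theorem for pairs applied to $M \geq R(t+1, n)$ then yields a $(t+1)$-subset $j_0 := b_{u_0} < \cdots < j_t := b_{u_t}$ all of whose pairs share a common color $m^* \in Y_l$.

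Finally, set $x_s := y_{j_{s-1}} \cdots y_{j_s-1}$ for $s = 1, \ldots, t$ and $x_0 := f_{m^*} x' y_1 \cdots y_{j_0 - 1}$ for any $x' \in A_{j_t}$. Every consecutive product $x_i \cdots x_j$ with $i \geq 1$ equals $y_{j_{i-1}} \cdots y_{j_j - 1}$ and lies in $C_{m^*}$ by the Ramsey property, while every $x_0 \cdots x_j$ equals $f_{m^*} x' y_1 \cdots y_{j_j - 1}$ and lies in $C_{m^*}$ by~(\ref{pigeonhole}) at step $j_j$. Each required sum, after dividing by $f_{m^*}$, takes the form $\bigl(x' + \sum_{s > i} q_{j_s - 1} y_{j_s - 1}\bigr) y_1 \cdots y_{j_i - 1}$ where each $q_{j_s - 1} \in Q_{j_s - 1}$ by our enlargement; iterated application of~(\ref{szemeredi}) brings $x' + \sum q$ into $A_{j_i}$, after which~(\ref{pigeonhole}) at step $j_i$ places the sum in $C_{m^*}$. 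The main obstacle is the combinatorial bookkeeping for $Q_j$: since the partition $j_0 < \cdots < j_t$ is unknown at construction time, we must pre-include shifts for every possible partition of $\{1, \ldots, j-1\}$ into consecutive blocks and every $h$-evaluation, which nonetheless stays finite because $H$, $F$, $t$, $n$ are all fixed.
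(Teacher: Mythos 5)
Your proposal is correct and follows essentially the same approach as the paper: you enlarge the shift sets $Q_j$ to absorb every $h$-weighted term over all possible block partitions, you use the ordinary pigeonhole to get many indices with the same refined $D$-color, and you then apply Ramsey's theorem for pairs (coloring $(i,j)$ by the class $C_m$ containing $y_i\cdots y_{j-1}$) to extract $t+1$ indices whose consecutive $y$-sub-products share a single color; the definition of the $x_s$'s and the verification via iterated use of conditions~(\ref{szemeredi.gen})--(\ref{y.gen}) match the paper's argument modulo a harmless reindexing ($j_0,\ldots,j_t$ versus $j_1,\ldots,j_{t+1}$).
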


\begin{example} Considering the constant $0$ and constant $1$ functions together with $h_1(y,z)=y$, $h_2(y,z)=z$ and $h_3(y,z)=yz$ we get the following monochromatic pattern:
\begin{eqnarray*}
    \{x,y,z,xy,yz,xyz, \\  x+y,x+z,x+y+z,x + y + yz, x+yz,\\  xy+z,xy+yz\}.
\end{eqnarray*}
%$$\{x,y,z,xy,yz,xyz,x+y,x+z,x+y+z,x+yz,xy+z,xy+yz\}$$ 

\end{example}
\begin{example} For a given $k$, considering the constant functions $1,\ldots, k$ we get the following monochromatic pattern:
    $$\{x,y,xy,x+iy: i\leq k\}.$$ 
\end{example}
%For example, when $n=2$ we may apply the above theorem with $H_1=\{\emptyset\mapsto i: i\leq k\}$ and $H_2=\{n\mapsto i: 0\leq i\leq k\}\cup \{n\mapsto in:i\leq k\}$ to find a monochromatic set of the form $$\{x,y,z,xy,yz,xyz,x+iy+jz,x+iyz,xy+jz,xy+yz:0\leq i,j\leq k\}$$ {\color{red} might be nice to format this like Example 1.6 in Joel's paper} in any finite coloring of $\mathbb{Q}.$

The proof is essentially the same as that of Theorem \ref{main.simple}, but now we will need to consider more complicated sets $Q_i$ in the proof and use Ramsey's theorem together with the pigeonhole principle. We repeat the first part of the proof for the sake of completeness.

%{\color{purple} feel free to use $K$ for the field. I will change $F$ back to $K$ in the previous proof}

%{\color{red} the proof is basically right now, and I think the notation is consistent with the earlier section.}

\begin{proof}
%    Let $F$ is an infinite field and let $d$ be an additive left-invariant mean on $F$. Suppose $F$ is colored into $n$ colors as $F= C_1\cup\ldots\cup C_n$. Using Lemma \ref{cover} find $k\in\mathbb{N}$ and finite sets $G,$ $Y_1,\ldots,Y_k,$ and $T_1,\ldots,T_k$ such that
%  \begin{itemize}
  
%   \vspace{2mm}
%       \item $\bigcup_{m\in Y_l} C_m$ is thick for each $l \leq k$.
      
%       \vspace{2mm}
      
%       \item For any $x\in F$ there is an $l\leq k$ so that $x\in G\cdot C_m$ for each $m\in Y_l$.
%   \end{itemize}

 Suppose $\field$ is colored into $n$ colors and write $\field\setminus\{0\}= C_1\cup\ldots\cup C_n$. Let $d$ be an additive invariant mean on $\field$. By Lemma \ref{cover} we get $k\in\mathbb{N}$ and finite sets $F\subseteq \field$ and $Y_1,\ldots,Y_k\subseteq [n]$ %and thick sets $T_1,\ldots,T_k\subseteq \field$ 
such that
  \begin{itemize}
  
  \vspace{2mm}
      \item[(i)] for each $l \leq k$ the set $\bigcup_{m\in Y_l} C_m$ is thick,
      
      \vspace{2mm}
      
      \item[(ii)] for each $x\in \field\setminus\{0\}$ there exists $l\leq k$ such that for each $m\in Y_l$ we have $x\in F\cdot C_m$.
  \end{itemize}
%  \begin{itemize}  
%  \vspace{2mm}
%      \item $\bigcup_{m\in Y_l} C_m$ is thick for each $l \leq k$,      
%      \vspace{2mm}      
%      \item for any $x\in \field\setminus\{0\}$ there is $l\leq k$ so that $x\in F\cdot C_m$ for each $m\in Y_l$.
%  \end{itemize}
%This means that for each $x\in \field$ there is an $l\leq k$ and a tuple $f_1,\ldots,f_n\in F$ so that  for each $m\in Y_l$ we have $x\in f_m C_m$. 
%Note that there are finitely many tuples $(l,f_1,\ldots,f_n)$ with $l\leq k$ and $f_1,\ldots,f_n\in F$ and t
This gives the coloring of $\field\setminus\{0\}$ with colors $D_{(l,f_1,\ldots,f_n)}$  for each tuple $(l,f_1,\ldots,f_n)$ where
$$x\in D_{(l,f_1,\ldots,f_n)}\quad\mbox{ if }\quad x\in f_m C_m\mbox{ for each } m\in Y_l.$$
% This means that for each $x\in F$ there is an $l\leq k$ and a tuple $g^1,\ldots,g^n\in G$ so that $x\in g_m C_m$ for each $m\in Y_l$. 
% Note that there are finitely many tuples $(l,g^1,\ldots,g^n)$ with $l\leq k$ and $g^1,\ldots,g^n\in G$ and this gives a new coloring of $F$ where for each tuple $(l,g^1,\ldots,g^n)$ as above we have a color $D_{(l,g^1,\ldots,g^n)}$ and 
% we put $$x\in D_{(l,g^1,\ldots,g^n)}\quad\mbox{ if }\quad x\in g_m C_m\mbox{ for each } m\in Y_l$$
% % and the $n$'th tuple $(f_1,\ldots,f_{Y_i})$ of $f\in F$.  
These colors do not need to be disjoint but we can always disjointify them. Write $K$ for the number of colors in the above coloring. Let $N,s\in\mathbb{N}$ be large numbers,  $s$ depending on $N$.

 There exists $r\in\mathbb{N}$ such that we can iteratively apply Theorem \ref{bergelson} $N$ many times as follows. We define a sequence of positive numbers $\alpha_1,\alpha_1'\ldots,\alpha_N,\alpha_N'$ with $\alpha_1=\frac{1}{K}$. At the $j$-th step, given $\alpha_j$ and $s$ we apply Theorem \ref{bergelson} and obtain $\alpha_j'$ such that for any set $A\subseteq \field$ with $d(A)\geq\alpha_j$ and for any $q_1,\ldots,q_s\in\field$ the set
\begin{equation}\label{berg.glas.gen}
    \{y\in \field: d(\{x\in A: x+q_1 y\in A,\ldots, x+q_s y\in A \})>\alpha'\}\mbox{ is IP}_r^*.
\end{equation}%\{y\in \field: d(\{x\in A_j: x+q_1 y\in A_j,\ldots, x+q_s y\in A_j \})>\alpha'\}$$
%is IP$_r^*$. 
We put $\alpha_{j+1}=\frac{\alpha_j'}{K}$.

% We first construct a sequence of real numbers $\alpha_0,\ldots,\alpha_N>0$ and natural numbers  $r_0,\ldots,r_N$ as follows. Put $\alpha_0=\frac{1}{k|G|^n}$ and $r_0=1$. Given $\alpha_j$ apply Theorem \ref{bergelson} with  $s=|G|\cdot(j+1)$ to find $r_{j+1}$ as well as $\alpha_j'$. Put $\alpha_{j+1}=\frac{\alpha_j}{k|G|^n}$.

% Let $r=\max\{r_0,\ldots, r_N\}$.  Since the sets $T_1,\ldots, T_k$ are thick, by Lemma \ref{prod} there are IP$_r$ sets $S_{1,j}\subseteq T_1,\ldots,S_{k,j}\subseteq T_k$ for $j<N$ such that $$S_{l_i,i}\cdot S^{i+1}_{l_{i+1}}\cdot\ldots\cdot S_{l_j,j}\subseteq  T_{l_i}$$ for any $i<j\leq N$ and any choice $l_i,l_{i+1}\ldots,l_j\leq k$ 

Lemma \ref{prod} gives us IP$_r$ sets $S_{1,j}\subseteq \bigcup_{m\in Y_1} C_m,\ldots,S_{k,j}\subseteq \bigcup_{m\in Y_k} C_m$ for each $j<N$ such that 
\begin{equation}\label{products.applied.gen}
S_{l_i,i}\cdot S_{l_{i+1},i+1}\cdot\ldots\cdot S_{l_j,j}\subseteq  \bigcup_{m\in Y_{l_i}}C_m   
\end{equation}
 for any $i\leq j\leq N$ and any choice of $l_i,l_{i+1},\ldots,l_j\leq k$ 

Inductively define a sequence of subsets $A_1\supseteq\ldots\supseteq A_N$ of $\field$ as well as finite sets $Q_1,\ldots,Q_N\subseteq  \field$, tuples $(l_1,f_{1,1},\ldots,f_{n,1}),\ldots, (l_N,f_{1,N},\ldots,f_{n,N})$ with $l_j\leq k$ and $f_{1,j},\ldots,f_{n,j}\in F$ for every $j\leq N$, as well as elements $y_1,\ldots,y_{N-1}\in \field\setminus\{0\}$
%\begin{itemize}
%    \item a sequence of subsets $A_1\supseteq\ldots\supseteq A_N$ of $\field$,
%    \item finite sets $Q_1,\ldots,Q_N\subseteq  \field$,
%    \item tuples $(l_1,f_{1,1},\ldots,f_{n,1}),\ldots, (l_N,f_{1,N},\ldots,f_{n,N})$ such that  $l_j\leq k$ and $f_{1,j},\ldots,f_{n,j}\in F$ for every $j\leq N$, 
%    \item and elements $y_1,\ldots,y_{N-1}\in \field\setminus\{0\}$,
%\end{itemize}
 %finite sets $Q_1,\ldots,Q_N\subseteq  \field$, tuples $(l_1,f_{1,1},\ldots,f_{n,1}),\ldots, (l_N,f_{1,N},\ldots,f_{n,N})$ with $l_j\leq k$ and $f_{1,j},\ldots,f_{n,j}\in F$ and elements $y_1,\ldots,y_{N-1}\in \field\setminus\{0\}$ 
 such that for every $j\leq N$ we have 

% We inductively define a sequence of subsets $A_0\supseteq\ldots\supseteq A_N$ of $F$ as well as finite sets $Q_0,\ldots,Q_N\subseteq  F$, tuples $(l_0,g^1_0,\ldots,g^n_0),\ldots, (l_N,g^1_N,\ldots,g^n_N)$ with $l_j\leq k$ and $g^1_j,\ldots,g^n_j\in G$ and elements $y_0,\ldots,y_N\in F$ and such that for every $j\leq N$ we have

\begin{enumerate}
   % \item $d(A_j)=\alpha_j$, 
%    \item $y_{a,b}:=y_a\cdot\ldots\cdot y_{b-1}$ for each $0\leq a<b\leq N.$
    %\item $Q_j=\{\frac{y_{i+1}\cdot\ldots\cdot y_j}{fy_0\cdot\ldots\cdot y_i}:  f\in F, i<j\}$,
    
    \item\label{szemeredi.gen} $A_{j+1}\subseteq \{x\in A_j\cap\bigcap_{q\in Q_j}(A_j-qy_j)\}$,
\item\label{pigeonhole.gen} $A_{j+1}\subseteq \{x\in A_j: xy_1\cdot\ldots\cdot y_j\in D_{(l_{j+1},f_{1,j+1},\ldots,f_{n,j+1})}\}$,
%    \item $A_j:=\{x\in A_{j-1}\cap\bigcap_{q\in Q_{j-1}}(A_{j-1}-qy_j): xy_0\ldots y_{j-1}\in D_{l_j}\}$,
%    \item $A_j\subseteq \{x\in A_{j-1}: xy_1\ldots y_{j-1}\in D_{(l_j,g_j^1,\ldots,g_j^n)}\}$
    %where $C_{i,n}$ is chosen to maximize $d(A_{t+1}).$  In this case we say that $A_{t+1}$ is $(i,n)$-good.
    \item\label{y.gen} $y_j\in S_{l_{j},j}$. %if $d_{l_j}=(l,g_1,\ldots,g_{|Y_i|})$. %where $i$ is as in point (3).
\end{enumerate}
% \begin{enumerate}
%   % \item $d(A_j)=\alpha_j$, 
% %    \item $y_{a,b}:=y_a\cdot\ldots\cdot y_{b-1}$ for each $0\leq a<b\leq N.$
%     %\item $Q_j=\{\frac{y_{i+1}\cdot\ldots\cdot y_j}{fy_0\cdot\ldots\cdot y_i}:  f\in F, i<j\}$,
%     \item $A_j\subseteq \{x\in A_{j-1}\cap\bigcap_{q\in Q_{j-1}}(A_{j-1}-qy_j)\}$
% %    \item $A_j:=\{x\in A_{j-1}\cap\bigcap_{q\in Q_{j-1}}(A_{j-1}-qy_j): xy_0\ldots y_{j-1}\in D_{l_j}\}$,
%     \item $A_j\subseteq \{x\in A_{j-1}: xy_1\ldots y_j\in D_{(l_j,g_j^1,\ldots,g_j^n)}\}$
%     %where $C_{i,n}$ is chosen to maximize $d(A_{t+1}).$  In this case we say that $A_{t+1}$ is $(i,n)$-good.
%     \item $y_j\in S^j_{l_{j-1}}$. %if $d_{l_j}=(l,g_1,\ldots,g_{|Y_i|})$. %where $i$ is as in point (3).
% \end{enumerate}
and $d(A_j)\geq\alpha_j$ for every $j\leq N$ and 
\begin{eqnarray*}
   Q_j=\{\frac{h(y_1\cdot\ldots\cdot y_{i_1-1},,\ldots,y_{i_{s-1}}\cdot\ldots\cdot y_{i_s-1})y_{i_s}\cdot\ldots\cdot y_{j-1}}{fy_1\cdot\ldots\cdot y_{i_q-1}}: \\ f\in F, h\in H, s\leq j, 0\leq i_1<\ldots<i_s\leq j, q\leq s\}, 
\end{eqnarray*}

%Q_j=\{\frac{h(y_1\cdot\ldots\cdot y_{i_1-1},,\ldots,y_{i_{s-1}}\cdot\ldots\cdot y_{i_s-1})y_{i_s}\cdot\ldots\cdot y_{j-1}}{fy_1\cdot\ldots\cdot y_{i_q-1}}:  f\in F, h\in H, s\leq j, 0\leq i_1<\ldots<i_s\leq j, q\leq s\},$$
where by convention, we write $y_1\cdot\ldots\cdot y_{0}=1$ and $Q_1=\{\frac{1}{f}:f\in F\}$.
%{\color{purple} say what Q1 is}
%$Q_j=\{\frac{h(z_1,\ldots,z_{i}),y_{b}\cdot\ldots\cdot y_j}{fy_0\cdot\ldots\cdot y_a}:  f\in F, h\in H_i, z_l\in \{y_c\cdot\ldots\cdot y_d\}, a<b<j, c<d\leq i<j\}$ 

%Such $A_i$ and $y_i$ are constructed exactly as in the proof of Theorem \ref{m2} by iterating Theorem \ref{bergelson}. {\color{red} I figure you won't be happy saying this, but I don't think including those details is helpful.}

Now we describe the induction.
Suppose that $A_j$, $Q_j$, $(l_j,f_{1,j}\ldots,f_{n,j})$ as well as $y_{j-1}$ have been constructed. We proceed to find $y_j$ as well as $A_{j+1}$, $Q_{j+1}$ and $(l_{j+1},f_{1,j+1}\ldots,f_{n,j+1})$.

First, since we can assume that $s$ is chosen big enough so that $|Q_j|\leq s$, and since the set $S_{l_j,j}$ is IP$_r$, by (\ref{berg.glas}) we can find $y_j\in S_{l_{j},j}$ and $A_j'\subseteq  A_j$ such that $d(A_j')=\alpha_j'$ and for every $x\in A_j'$ we have $$x+qy_{j}\in A_j \quad\mbox{for all }q\in Q_j.$$ 

Second, look at $y_1\ldots y_j A_j'$ and note that for some $(l_{j+1},f_{1,j+1},\ldots,f_{n,j+1})$ the set $\{x\in A_j': xy_1\cdot\ldots\cdot y_j\in D_{(l_{j+1},f_{1,j+1},\ldots,f_{n,j+1})}\}$ has density at least $\alpha_{j+1}$, by our choice of $\alpha_{j+1}$. Put $$A_{j+1}=\{x\in A_j': xy_1\cdot\ldots\cdot y_j\in D_{(l_{j+1},f_{1,j+1},\ldots,f_{n,j+1})} \}.$$

%so long as $r$ is chosen large enough in advance.

%So long as $N$ was chosen large enough, by the pigeonhole principle we find $i<j<N$ so that $(l_i,f_{1,i},\ldots,f_{n,i})=(l_j,f_{1,j},\ldots,f_{n,j})=(l,f_1,\ldots,f_n)$.

%Let $y=y_i\ldots y_{j-1}$. By the property (\ref{y}) and (\ref{products.applied}), there exists  $m\in Y_{l}$ such that $y\in C_m$. % such that $m\in Y_i$.  
%Let $x'\in A_{j}$ and put $x=f_m x' y_1\ldots y_{i-1}$.

% for every $j>0$. 

% To start, we put $j_0=1$ and $Q_0=\{1\}$ and find $(l_0,g_0^0,\ldots,g_0^n)$ such that for $A_0=D_{(l_0,g_0^0,\ldots,g_0^n)}$ we have $d(A_0)\geq\alpha_0$.  The latter is possible by our choice of $\alpha_0$.

% Now we describe the induction. Suppose that all the objects are constructed up to $j$ and we proceed to step $j+1$.

% First, since $r\geq r_{j+1}$ by Theorem \ref{bergelson} and the fact that $|Q_j|\leq |G||H|\binom{j+1}{2}$ {\color{red} I think this calculation only adds confusion}, we can find find $y_{j+1}\in S^{j+1}_{l_j}$ and $A_j'\subseteq  A_j$ such that $d(A_j')=\alpha_j'>0$ and for every $x\in A_j'$ we have $$x-qy_{j+1}\in A_j \quad\mbox{for all }q\in Q_j.$$ 

% Second, look at $y_1\ldots y_{j+1} A_j'$ and note that for some $(l_{j+1},g_{j+1}^1,\ldots,g_{j+1}^n)$ the set $\{x\in A_j': xy_1\ldots y_{j+1}\in D_{(l_{j+1},g_{j+1}^1,\ldots,g_{j+1}^n)}$ has density at least $\alpha_{j+1}$, by our choice of $\alpha_{j+1}$. Put $$A_{j+1}=\{x\in A_j': xy_0\ldots y_{j+1}\in D_{(l_{j+1},g_{j+1}^1,\ldots,g_{j+1}^n)} \}.$$

% %so long as $r$ is chosen large enough in advance.

So long as $N$ was chosen large enough, %for a large $M\in\mathbb{N}$ 
by the pigeonhole principle we find $j_1,\ldots,j_M$ so that for some $(l,f_1,\ldots,f_n)$ we have
\begin{equation}\label{pig.gen}
    (l_{j_1},f_{1,j_1},\ldots,f_{n,j_1})=(l_{j_i},f_{1,j_i},\ldots,f_{n,j_i})=(l,f_1,\ldots,f_n)
\end{equation}
%(l_{j_1},f_{1,j_1},\ldots,f_{n,j_1})=(l_{j_i},f_{1,j_i},\ldots,f_{n,j_i})=(l,f_1,\ldots,f_n)$ 
for each $i\leq M$. For simplicity of notation suppose $j_1=1,\ldots, j_M=M$.

 Note that if $i<j\leq M$, then by (\ref{y.gen}) and (\ref{products.applied.gen}) there exists $m\in Y_l$ such that $y_i\cdot\ldots\cdot y_{j-1}\in C_m$.  Now, consider the coloring of $[M]\times[M]$ where we give $(i,j)$ with $i<j$ the color $m$ if $$y_{i}\cdot\ldots\cdot y_{j-1} \in C_m.$$  So long as $M$ is large enough, by Ramsey's theorem there is a color $m$ and subsequence  $j_1,\ldots,j_{t+1}$ so that all pairs from the subsequence have the same color. %Put $$y_{j_1}\cdot\ldots\cdot y_{j_2-1},\quad\ldots,\quad y_{j_{t-1}}\cdot\ldots\cdot y_{j_t-1}\in C_m.$$  %{\color{purple} make it from $i_1$ $i_2-1$}

Put $x_1=y_{j_1}\cdot\ldots\cdot y_{j_2-1}, \ \ldots,\ x_t=y_{j_{t}}\cdot\ldots\cdot y_{j_{t+1}-1}$. %$x_i=y_{b_{i-1},b_{i}}$ for each $0<i\leq n.$    

First note that by the choice of the subsequence $j_1,\ldots,j_t$  we have $$x_i\cdot\ldots\cdot x_j\in C_m$$ for every $1\leq i\leq j$.
%Then since $y_{b_i,b_j}y_{b_j,b_k}=y_{b_i,b_k}$ we have $\prod_{0\leq i\leq j \leq k\leq n}x_j\in C_m.$  

Choose $x_0'\in A_{j_t}$ and put $x_0= f_m x_0' y_1\cdot\ldots\cdot y_{j_1-1}$.
%Next, Let $x_0'\in A_{b_0}$ and put $x_0=g_m x_0' y_1\ldots y_{b_0}$. 
Since $A_{j}\subseteq  A_{i}$ for $i<j$,  %by property (3) of the construction and we know that  for every $i$ we have
%$x' y_1\ldots y_{j_i}\in D_{(l,f_1,\ldots,f_n)}$, so $$x_1=f_m x'y_1\ldots y_{j_i}\in C_m.$$ 
by property (\ref{pigeonhole.gen}) we know that by (\ref{pig.gen}) we have $x_0' y_1\cdot\ldots\cdot y_{j_i-1}\in D_{(l,f_1,\ldots,f_n)}$, so $$x_0 x_1\cdot\ldots\cdot x_i= f_m x_0' y_1\cdot\ldots\cdot y_{j_i-1}\in C_m$$ for each $0\leq i\leq n$. %\subseteq  \frac{C_{i,n}}{f_j}\subseteq C_j,$ and similarly $xy\in \frac{A_ty_{0,t}}{f_j}\subseteq \frac{C_{i,n}}{f_j}\subseteq  C_j.$   

Now fix $i$ and $h_{i+1},\ldots,h_t\in H$.
By property (\ref{szemeredi.gen}) we know that if $i<j$, then $A_{j}\subseteq  A_{j-1}\cap \bigcap_{q\in Q_{j-1}}(A_{j-1}-qy_{j-1})\subseteq  A_{i}\cap \bigcap_{q\in Q_{j-1}}(A_i-qy_{j-1})$, so 
\begin{equation}\label{***}
x_0'+q y_{j-1}\in A_i     
\end{equation}
 for every $q\in Q_{j-1}$.

Put $$q_{s}=\frac{h_s(x_1,\ldots,x_{s-1})y_{j_{s}}\cdot\ldots\cdot y_{j_{s+1}-2}}{f_m y_1\cdot\ldots\cdot y_{j_i-1}}\in Q_{j_{s+1}-1}$$ for every $s$ such that $i<s\leq t$. Then by iterating (\ref{***}) we have
$$x_0'+
q_{j_{i}-1}y_{j_{i+2}-1}+\ldots+q_{j_{s}}y_{j_{s+1}-1}
\in  A_{j_s}$$ for every $s\leq t$, so $x_0'+
q_{i+1}y_{j_{i+2}-1}+\ldots+q_{t}y_{j_{t+1}-1}
\in  A_{j_t}$. Since $A_{j_t}\subseteq A_{j_i}$ we get
$y_1\cdot\ldots\cdot y_{j_i-1}(x_0'+%\sum_{j\in \{b_k: i<k\}} q_j y_j
q_{i+1}y_{j_{i+2}-1}+\ldots+q_{t}y_{j_{t+1}-1}
)\in D_{(l,,f_1\ldots,f_n)}$, which means that $f_m y_1\cdot\ldots\cdot y_{j_i-1}(x_0'+%\sum_{j\in \{b_k: i<k\}} q_j y_j
q_{i+1}y_{j_{i+2}-1}+\ldots+q_{t}y_{j_{t+1}-1}
)\in C_m$ and so $$x_0\cdot\ldots\cdot x_i+%\sum_{i<j\leq n}h_j(x_1,\ldots,x_{j-1})x_j
h_{i+1}(x_1,\ldots,x_{i})x_{i+1}+ \ldots + h_t(x_1,\ldots,x_{t-1})x_{t}\in C_m$$ as needed. %for any $h_j\in H_j$ and $0\leq i\leq j\leq n$. %\frac{y_{0,s}A_s}{f}\subseteq  \frac{C_{i,n}}{f_j}\subseteq  C_j$ for any $h\in H.$
\end{proof}

\section{Open problems}

Our main result was a common extension of Hindman's conjecture and the van der Waerden theorem, so it is natural to wonder if our approach might extend to generalize the polynomial van der Waerden theorem as well. 

\begin{question}
Let $P$ be a finite set of integral polynomials.  Does any finite coloring of $\mathbb{Q}$ contain a monochromatic set of the form $\{x,y,xy,x+p(y):p\in P\}?$
\end{question}

This is known to hold even for $\mathbb{N}$ if the $y$ term is dropped \cite{moreira2017monochromatic}.  Our approach would immediately extend to prove this if the IP$^*_r$ polynomial Szemer\'edi theorem were known to be true, but perhaps finding the right quantitative version of the polynomial van der Waerden theorem will suffice.

Another potential direction is to extend our results to generalize the geometric van der Waerden theorem as well.  Currently even the following is open.

\begin{question}
Does every finite coloring of $\mathbb{Q}$ contain a monochromatic set of the form $\{xy,xy^2,x+y\}?$
\end{question}

Finally, let us mention that Hindman also conjectured stronger statements, namely that any finite coloring of $\mathbb{N}$ contains monochromatic sets of the form $FS(A)\cup FP(A)$ for $A\subseteq \mathbb{N}$ arbitrarily large.  Our approach can deal with some subsets of these configurations in $\mathbb{Q}$, but the problem still seems difficult even for $|A|=3$ in $\mathbb{Q}.$

\bibliographystyle{amsalpha} 
\bibliography{2}

\end{document}